\numberwithin{equation}{section}
\numberwithin{figure}{section}
\theoremstyle{plain}
\newtheorem{thm}{\protect\theoremname}[section]
\theoremstyle{plain}
\newtheorem{lem}[thm]{\protect\lemmaname}
\theoremstyle{definition}
\newtheorem{defn}[thm]{\protect\definitionname}
\theoremstyle{plain}
\newtheorem{prop}[thm]{\protect\propositionname}
\newenvironment{proof}[1][\protect\proofname]{\par
\normalfont\topsep6\p@\@plus6\p@\relax
\trivlist
\itemindent\parindent
\item[\hskip\labelsep\scshape #1]\ignorespaces
}{%
\endtrivlist\@endpefalse
}
\providecommand{\proofname}{Proof}
\numberwithin{equation}{section}
\date{}
\providecommand{\definitionname}{Definition}
\providecommand{\lemmaname}{Lemma}
\providecommand{\propositionname}{Proposition}
\providecommand{\theoremname}{Theorem}
\begin{document}

\title{Nearly  cloaking for the elasticity system with residual stress}

\author{Yi-Hsuan Lin}
\maketitle
\begin{abstract}
The nearly cloaking via transformation optics approach for the isotropic
elastic wave fields is considered. This work extends the study of
the nearly cloaking scheme to the elasticity system with residual
stress in $\mathbb{R}^{N}$ for $N=2,3$, which is an anisotropic
elasticity system. It is worth to mention that there are no minor
symmetric properties for the elastic tensor with residual stress and
this system is invariant under a coordinate transformation. Therefore,
we think the elasticity system residual stress model is more natural
than the isotropic elasticity system in designing the elastic cloaking
medium in the physical sense. In addition, the main difficulty of
treating this problem lies in the fact that there are no layer potential
theory for the residual stress system. Instead, we will derive suitable
elliptic estimates for the elasticity system residual stress by comparing
with the Lamé system to achieve desired results.  
\end{abstract}
\textbf{Key words}: Elastic cloaking, nearly cloaking, anisotropic
elasticity system, residual stress\\
\textbf{Mathematics Subject Classification}: 74B10, 35R30, 35J25,
35R30, 74J20.

\section{Introduction}

This work is concerned with the cloaking theory of the elastic waves
with residual stress in $\mathbb{R}^{N}$ for $N=2,3$. We formulate
the mathematical problem in the following. Let $\Omega$ a bounded
simply connected $C^{\infty}$-smooth domain in $\mathbb{R}^{N}$,
for $N=2,3$ and $u(x)=(u_{i}(x))_{i=1}^{N}$ is the displacement
vector field. Consider the boundary value time-harmonic elasticity
system 
\begin{equation}
\begin{cases}
\sum_{j,k,l=1}^{N}\dfrac{\partial}{\partial x_{j}}\left(C_{ijkl}(x)\dfrac{\partial u_{k}}{\partial x_{l}}\right)+\kappa^{2}\rho u_{i}=0 & \mbox{ in }\Omega\mbox{ for }i=1,2,\cdots,N,\\
\mathcal{N}_{\mathcal{C}}u=\phi\in H^{-1/2}(\partial\Omega)^{N} & \mbox{ on }\partial\Omega,
\end{cases}\label{eq:Main Equations}
\end{equation}
where $\mathcal{C}=(C_{ijkl})$ is a four tensor, $\kappa\in\mathbb{R}$
is the frequency, $\rho=\rho(x)$ denotes the density of the medium
and $\mathcal{N}_{\mathcal{C}}u$ is the Neumann data defined as 
\[
\mathcal{N}_{\mathcal{C}}u:=\left(\sum_{j,k,l=1}^{N}\nu_{j}C_{1jkl}\dfrac{\partial u_{k}}{\partial x_{l}},\sum_{j,k,l=1}^{N}\nu_{j}C_{2jkl}\dfrac{\partial u_{k}}{\partial x_{l}},\cdots,\sum_{j,k,l=1}^{N}\nu_{j}C_{Njkl}\dfrac{\partial u_{k}}{\partial x_{l}}\right)
\]
 is the \textit{boundary traction} on $\partial\Omega$ with $\nu=(\nu_{1},\nu_{2},\cdots,\nu_{N})$
denoting the unit outer normal on $\partial\Omega$. In (\ref{eq:Main Equations}),
$\rho=\rho_{R}+\sqrt{-1}\rho_{I}$ is a complex-valued function with
$\rho_{R}>0$ and $\rho_{I}\geq0$. In order to simplify notations,
we denote $\nabla\cdot(\mathcal{C}\nabla u)$ componentwisely by 
\[
\nabla\cdot(\mathcal{C}\nabla u)_{i}=\sum_{j,k,l=1}^{N}\dfrac{\partial}{\partial x_{j}}\left(C_{ijkl}(x)\dfrac{\partial u_{k}}{\partial x_{l}}\right)\mbox{ for }i=1,2,\cdots,N.
\]
In addition, for any $v=(v_{1},v_{2},\cdots,v_{N})\in H^{1}(\Omega)^{N}$,
we can rewrite (\ref{eq:Main Equations}) by the variational formula
\begin{equation}
\mathcal{B}_{\mathcal{C}}(u,v):=\int_{\Omega}\left\{ \sum_{i.j.k.l=1}^{N}C_{ijkl}\dfrac{\partial u_{k}}{\partial x_{l}}\dfrac{\partial\overline{v_{i}}}{\partial x_{j}}-\kappa^{2}\rho u_{i}\overline{v_{i}}\right\} dx=\int_{\partial\Omega}\phi\cdot\overline{v}dS,\label{eq:Variational Equation}
\end{equation}
where $\overline{v_{i}}$ is the complex conjugate of $v_{i}$ for
$i=1,2,\cdots,N$. Recall that the strong convexity condition is given
as: there exists $c_{0}>0$ such that for all $N\times N$ symmetric
matrix $\varepsilon=(\varepsilon_{ij})_{i,j=1}^{N}$, 
\begin{equation}
\sum_{i,j,k,l=1}^{N}C_{ijkl}\varepsilon_{ij}\varepsilon_{kl}\geq c_{0}\sum_{i,j=1}^{N}|\varepsilon_{ij}|^{2},\mbox{ for all }x\in\Omega.\label{eq:Strong convexity}
\end{equation}
Via the strong convexity condition (\ref{eq:Strong convexity}) and
the G$\mathring{\mathrm{a}}$rding's inequality 
\[
\mathcal{B}_{\mathcal{C}}(u,u)\geq c_{0}\sum_{i,j=1}^{N}\|\nabla u\|_{L^{2}(\Omega)^{N\times N}}^{2}-\kappa^{2}\|\rho\|_{L^{\infty}(\Omega)}\|u\|_{L^{2}(\Omega)^{N}}^{2}\mbox{ for all }u\in H^{1}(\Omega)^{N},
\]
then there exists a unique weak solution to (\ref{eq:Variational Equation})
for all frequency $\kappa\in\mathbb{R}_{+}$ except for a discrete
set $\mathcal{D}$ with the accumulating point at infinity. By using
the well-posed property of (\ref{eq:Main Equations}), we can define
the boundary Neumann-to-Dirichlet (NtD) map as 
\begin{equation}
\Lambda_{\mathcal{C},\rho}:H^{-1/2}(\partial\Omega)^{N}\to H^{1/2}(\partial\Omega)^{N}\mbox{ with }\Lambda_{\mathcal{C},\rho}\phi=u|_{\partial\Omega},\label{eq:NtD map}
\end{equation}
where $u\in H^{1}(\Omega)^{N}$ is the unique solution of (\ref{eq:Main Equations}).

In this paper, for the elasticity system with residual stress, we
consider the following boundary value problem (\ref{eq:Main Equations})
with the elasticity tensor $\mathcal{C}(x)=(C_{ijkl}(x))_{i,j,k,l=1}^{N}$
and 
\begin{equation}
C_{ijkl}(x)=\lambda(x)\delta_{ij}\delta_{kl}+\mu(x)(\delta_{ik}\delta_{jl}+\delta_{il}\delta_{jk})+t_{jl}(x)\delta_{ik},\label{eq:Residual stress four tensor}
\end{equation}
where $\delta_{ij}$ is the Kronecker delta and $\lambda(x),\mu(x)$
are the Lamé parameters and $\mathcal{C}(x)$ satisfies the strong
convexity condition (\ref{eq:Strong convexity}). The second-rank
tensor $T(x)=(t_{jl}(x))_{j,l=1}^{N}$ is the residual stress and
satisfies the following conditions: 
\begin{enumerate}
\item Symmetry: 
\begin{equation}
t_{jl}(x)=t_{lj}(x),\mbox{ for }j,l=1,2,\cdots,N,\mbox{ for all }x\in\Omega.\label{eq:Symmetric}
\end{equation}

\item Divergence free:
\begin{equation}
\nabla\cdot T(x)=\sum_{l}\partial_{x_{l}}t_{jl}(x)=0,\mbox{ for }j=1,2,\cdots,N,\mbox{ for all }x\in\Omega.\label{eq:Divergence free}
\end{equation}

\item Vanishing boundary: 
\begin{equation}
T(x)\cdot\nu=0\mbox{ or }\sum_{l=1}^{N}t_{jl}(x)\nu_{l}=0\mbox{ for all }x\in\partial\Omega\label{eq:zero traction boundary}
\end{equation}
for $j=1,2,\cdots,N$, where $\nu=(\nu_{1},\nu_{2},\cdots,\nu_{N})$
is a unit outer normal on $\partial\Omega$. In fact, (\ref{eq:Divergence free})
and (\ref{eq:zero traction boundary}) can be expressed weakly by
\[
\int_{\Omega}T\cdot\nabla vdx=0
\]
for all $v\in H^{1}(\Omega)^{N}$.
\end{enumerate}
Moreover, for $N=2,3$, we also assume that the Lamé moduli satisfy
the strong convexity condition 
\begin{equation}
\mu(x)\geq c_{0}>0\mbox{ and }N\lambda(x)+2\mu(x)\geq c_{0}>0\mbox{ for all }x\in\Omega.\label{eq:strong convexity for Lame}
\end{equation}
For more details about the strong convexity property, we refer readers
to \cite{tanuma2007stroh}. It is easy to see when (\ref{eq:Residual stress four tensor})
is the elastic four tensor, (\ref{eq:Main Equations}) is an \textit{anisotropic}
elasticity system. By the symmetric property (\ref{eq:Symmetric}),
for the elastic tensor with residual stress (\ref{eq:Residual stress four tensor}),
we have the major symmetric property without minor symmetry, which
means 
\begin{equation}
C_{ijkl}=C_{klij}\mbox{ but }C_{ijkl}\neq C_{jikl}\mbox{ for any }i,j,k,l=1,2,\cdots,N.\label{eq:Symmetry for residual stress}
\end{equation}
In the homogeneous medium, $\lambda(x)\equiv\lambda^{(0)},\mu(x)\equiv\mu^{(0)}$
are the Lamé constants and $T=T(x)=(t_{jl}(x))\in W^{2,\infty}(\Omega)^{N\times N}$
is an arbitrary residual stress coefficient satisfying (\ref{eq:Symmetric})-(\ref{eq:zero traction boundary})
for $i,j=1,2,\cdots,N$. Let $\mathcal{C}(x)\equiv\mathcal{C}^{(0)}$
with 
\begin{equation}
C_{ijkl}^{(0)}=\lambda^{(0)}\delta_{ij}\delta_{kl}+\mu^{(0)}(\delta_{jk}\delta_{jl}+\delta_{jk}\delta_{il})+t_{jl}(x)\delta_{ik}\label{eq:Lame constants}
\end{equation}
be a constant elasticity four tensor with residual stress satisfying
(\ref{eq:Strong convexity}) and $\rho\equiv1$ be a constant density
in a homogeneous medium. In this case, the constant elasticity system
with residual stress (\ref{eq:Main Equations}) can be reduced to
the following boundary value problem 
\begin{equation}
\begin{cases}
\mathcal{L}_{0}^{R}u_{0}+\kappa^{2}u_{0}=0 & \mbox{ in }\Omega,\\
\mathcal{N}_{\mathcal{C}^{(0)}}u_{0}=\phi & \mbox{ on }\partial\Omega,
\end{cases}\label{eq:Homogeneous Equation}
\end{equation}
where $\mathcal{L}_{0}^{R}u:=\nabla\cdot(\mathcal{C}^{(0)}\nabla u_{0})$
is the second order elasticity operator with residual stress and the
boundary traction $\mathcal{N}_{\mathcal{C}^{(0)}}u_{0}=\nu\cdot(\mathcal{C}^{(0)}\nabla u_{0})$.
More precisely, by (\ref{eq:Symmetric}) and (\ref{eq:zero traction boundary}),
componentwisely, $u_{0}=(u_{0,1},u_{0,2},\cdots,u_{0,N})$, it is
easy to see 
\[
\sum_{j,k,l=1}^{N}\nu_{j}\left(t_{jl}(x)\delta_{ik}\right)\dfrac{\partial u_{0,k}}{\partial x_{l}}=\sum_{j,k,l=1}^{N}\left(t_{lj}(x)\nu_{j}\right)\delta_{ik}\dfrac{\partial u_{0,k}}{\partial x_{l}}=0\mbox{ for }x\in\partial\Omega,
\]
for $i=1,2,\cdots,N$ and $N=2,3$, or equivalently, $\nu\cdot(T\nabla u_{0})=0$
on $\partial\Omega$, so we can represent the boundary traction for
the elasticity system with residual stress by 
\[
\mathcal{N}_{C^{(0)}}u_{0}=\begin{cases}
2\mu^{(0)}\dfrac{\partial u_{0}}{\partial\nu}+\lambda^{(0)}\nu\nabla\cdot u_{0}+\mu^{(0)}\nu^{T}(\partial_{2}u_{0,1}-\partial_{1}u_{0,2}) & \mbox{ when }N=2,\\
2\mu^{(0)}\dfrac{\partial u_{0}}{\partial\nu}+\lambda^{(0)}\nu\nabla\cdot u_{0}+\mu^{(0)}\nu\times(\nabla\times u_{0}) & \mbox{ when }N=3,
\end{cases}
\]
where $\nu$ is a unit outer normal on $\partial\Omega$ and $\nu^{T}=(-\nu_{2},\nu_{1})\perp\nu$
and $u_{0}=(u_{0,1},u_{0,2})$ when $N=2$. In this paper, we can
regard the medium $\left\{ \Omega;\mathcal{C}^{(0)},1\right\} $ as
the \textit{free} reference space in our study about the invisible
cloaking for the elasticity system with residual stress in this work.

Indeed, we have the following facts about the invariance of coordinates
transformation for the elasticity system with residual stress. Let
$\widetilde{x}=F(x):\Omega\to\widetilde{\Omega}$ be an orientation-preserving,
bi-Lipschitz mapping, then we have the following push-forward relations
of $\mathcal{C}$ and $\rho$ defined by 
\begin{eqnarray*}
\widetilde{\mathcal{C}} & := & F_{*}\mathcal{C}=\widetilde{C}_{iqkp}(\widetilde{x})=\left.\dfrac{1}{\det M}\left\{ \sum_{j,l=1}^{N}C_{ijkl}\dfrac{\partial\widetilde{x_{p}}}{\partial x_{l}}\dfrac{\partial\widetilde{x_{q}}}{\partial x_{j}}\right\} \right|_{x=F^{-1}(\widetilde{x})},\\
\widetilde{\rho} & := & F_{*}\rho=\left.\left(\dfrac{\rho}{\det M}\right)\right|_{x=F^{-1}(\widetilde{x})},
\end{eqnarray*}
where $M=\left(\dfrac{\partial\widetilde{x_{i}}}{\partial x_{j}}\right)_{i,j=1}^{N}$.
To simplify the notation, we set $\widetilde{\nabla}=\nabla_{\widetilde{x}}$
and $\left\{ \widetilde{\Omega};\widetilde{\mathcal{C}},\widetilde{\rho}\right\} =F_{*}\left\{ \Omega;\mathcal{C},\rho\right\} $.
Besides, for any $N\times N$ symmetric matrix $(\varepsilon_{ij})$,
we have 
\[
\sum_{i,q,k,p=1}^{N}\widetilde{C}_{iqkp}a_{iq}a_{kp}=\dfrac{1}{\det M}\sum_{i,j,k,l=1}^{N}C_{ijkl}\widetilde{\epsilon}_{ij}\widetilde{\varepsilon}_{kl},
\]
where 
\[
\widetilde{\varepsilon}_{ij}=\sum_{q=1}^{N}\dfrac{\partial\widetilde{x_{q}}}{\partial x_{j}}\varepsilon_{iq},\mbox{ for }i,j=1,2,\cdots,N.
\]
Using the strong convexity condition (\ref{eq:Strong convexity})
and the bi-Lipschitz property of $F$, we obtain 
\begin{equation}
\sum_{i,q,k,p=1}^{N}\widetilde{C}_{iqkp}\widetilde{\varepsilon}_{ij}\widetilde{\varepsilon}_{kl}\geq c_{0}\sum_{i,j=1}^{N}|\widetilde{\varepsilon}_{ij}|^{2}\geq\widetilde{c_{0}}\sum_{i,j=1}^{N}|\varepsilon_{ij}|^{2}\label{eq:Strong convexity after transformation}
\end{equation}
for some $\widetilde{c_{0}}>0$. Moreover, the straightforward calculation
will give that $\widetilde{C}$ is major symmetric but not minor symmetric,
i.e., 
\begin{equation}
\widetilde{C}_{iqkp}=\widetilde{C}_{kpiq},\mbox{ but }\widetilde{C}_{iqkp}\neq\widetilde{C}_{qikp}\mbox{ for any }i,q,k,p=1,2,\cdots,N.\label{eq:transformed symmetry}
\end{equation}
The relations (\ref{eq:Symmetry for residual stress}) and (\ref{eq:transformed symmetry})
make the elastic tensor with residual stress is \textit{invariant}
under the change of coordinates. Therefore, the elasticity system
with residual stress is invariant under the coordinate transformation.
More explicitly, we have the following lemma, which was proved in
\cite{hu2015nearly}.
\begin{lem}
\label{lem:Invarant NtD}\cite{hu2015nearly} The following relation
holds: $u\in H^{1}(\Omega)^{N}$ is a solution to 
\begin{equation}
\nabla\cdot(\mathcal{C}\nabla u)+\kappa^{2}\rho u=0\mbox{ in }\Omega,\label{eq:1}
\end{equation}
if and only if $\widetilde{u}=u\circ F^{-1}\in H^{1}(\widetilde{\Omega})$
is a solution to 
\begin{equation}
\widetilde{\nabla}\cdot(\widetilde{\mathcal{C}}\widetilde{\nabla}\widetilde{u})+\kappa^{2}\widetilde{\rho}\widetilde{u}=0\mbox{ in }\widetilde{\Omega}.\label{eq:2}
\end{equation}
Moreover, if $F=\mathrm{Identity}$ on $\partial\Omega$, then 
\[
\Lambda_{\mathcal{C},\rho}=\Lambda_{\widetilde{\mathcal{C}},\widetilde{\rho}},
\]
where $\Lambda_{\mathcal{C},\rho}$ and $\Lambda_{\widetilde{\mathcal{C}},\widetilde{\rho}}$
are the NtD maps associated to (\ref{eq:1}) and (\ref{eq:2}), respectively.
\end{lem}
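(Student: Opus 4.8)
The plan is to reduce both equations to their weak (variational) forms and link them by a change of variables, after which the NtD identity drops out of the boundary term. First I would record that $u\in H^{1}(\Omega)^{N}$ solves (\ref{eq:1}) if and only if the sesquilinear form $\mathcal{B}_{\mathcal{C}}$ from (\ref{eq:Variational Equation}), with the boundary integral deleted, vanishes against every $v\in H_{0}^{1}(\Omega)^{N}$, and likewise (\ref{eq:2}) corresponds to $\mathcal{B}_{\widetilde{\mathcal{C}}}(\widetilde{u},\widetilde{v})=0$ on $\widetilde{\Omega}$. Because $F$ is orientation preserving and bi-Lipschitz, the Jacobian $M$ and its inverse lie in $L^{\infty}$, $\det M$ is bounded above and below away from zero, the pullback $w\mapsto w\circ F^{-1}$ is a bounded isomorphism of $H_{0}^{1}(\Omega)^{N}$ onto $H_{0}^{1}(\widetilde{\Omega})^{N}$ and of $H^{1}(\Omega)^{N}$ onto $H^{1}(\widetilde{\Omega})^{N}$, and the chain rule $\partial_{x_{l}}u_{k}=\sum_{p}(\partial_{\widetilde{x}_{p}}\widetilde{u}_{k})\,\partial_{x_{l}}\widetilde{x}_{p}$ holds almost everywhere. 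This regularity is exactly what is needed to carry out the substitution below at the $H^{1}$ level, and is also where the bi-Lipschitz hypothesis on $F$ enters.

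Next I would substitute $x=F^{-1}(\widetilde{x})$ into $\mathcal{B}_{\mathcal{C}}(u,v)$, using $dx=(\det M)^{-1}\,d\widetilde{x}$ together with the chain rule for both $u$ and $v$. Collecting the coefficient of $(\partial_{\widetilde{x}_{p}}\widetilde{u}_{k})\,\overline{(\partial_{\widetilde{x}_{q}}\widetilde{v}_{i})}$ produces exactly $(\det M)^{-1}\sum_{j,l}C_{ijkl}\,\partial_{x_{l}}\widetilde{x}_{p}\,\partial_{x_{j}}\widetilde{x}_{q}=\widetilde{C}_{iqkp}$, the push-forward tensor of the excerpt, while the zeroth-order term becomes $\kappa^{2}\widetilde{\rho}\,\widetilde{u}_{i}\overline{\widetilde{v}_{i}}$ since $\widetilde{\rho}=\rho/\det M$; hence $\mathcal{B}_{\mathcal{C}}(u,v)=\mathcal{B}_{\widetilde{\mathcal{C}}}(\widetilde{u},\widetilde{v})$ for all admissible test functions, which gives the equivalence (\ref{eq:1})$\Leftrightarrow$(\ref{eq:2}) at once (the reverse implication is the identical computation applied to $F^{-1}$, which is again orientation preserving and bi-Lipschitz). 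The only delicate point — and the only place the paper's emphasis on the absence of minor symmetry matters — is that throughout this substitution one must retain the \emph{full} gradient $\nabla u$ rather than the strain tensor, so the slot-by-slot bookkeeping is essential: the derivative index of the test function pairs with the second index of $\widetilde{\mathcal{C}}$ ($q$ with $j$) and the derivative index of $u$ with the fourth ($p$ with $l$), which is precisely how $\widetilde{\mathcal{C}}$ is defined and is compatible with the major symmetry (\ref{eq:transformed symmetry}).

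For the NtD statement I would run the same change of variables but now with $v\in H^{1}(\Omega)^{N}$ \emph{not} required to vanish on $\partial\Omega$, keeping the boundary term. Integration by parts yields $\mathcal{B}_{\mathcal{C}}(u,v)=\int_{\partial\Omega}(\mathcal{N}_{\mathcal{C}}u)\cdot\overline{v}\,dS$ and, on the transformed side, $\mathcal{B}_{\widetilde{\mathcal{C}}}(\widetilde{u},\widetilde{v})=\int_{\partial\widetilde{\Omega}}(\mathcal{N}_{\widetilde{\mathcal{C}}}\widetilde{u})\cdot\overline{\widetilde{v}}\,d\widetilde{S}$. Since $F=\mathrm{Identity}$ on $\partial\Omega$, we have $\partial\widetilde{\Omega}=\partial\Omega$, $dS=d\widetilde{S}$, and the traces of $v$ and $\widetilde{v}$ on this common boundary agree; combining these facts with $\mathcal{B}_{\mathcal{C}}(u,v)=\mathcal{B}_{\widetilde{\mathcal{C}}}(\widetilde{u},\widetilde{v})$ and letting the trace of $v$ run over all of $H^{1/2}(\partial\Omega)^{N}$ forces $\mathcal{N}_{\mathcal{C}}u=\mathcal{N}_{\widetilde{\mathcal{C}}}\widetilde{u}$ on $\partial\Omega$. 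Therefore, if $u$ is the solution of (\ref{eq:Main Equations}) with Neumann data $\phi$, then $\widetilde{u}=u\circ F^{-1}$ solves the corresponding boundary value problem for $\{\widetilde{\Omega};\widetilde{\mathcal{C}},\widetilde{\rho}\}$ with the same data $\phi$, and by uniqueness $\Lambda_{\mathcal{C},\rho}\phi=u|_{\partial\Omega}=\widetilde{u}|_{\partial\widetilde{\Omega}}=\Lambda_{\widetilde{\mathcal{C}},\widetilde{\rho}}\phi$. I do not anticipate a genuine obstacle here: this is a careful but essentially routine change of variables, with the real work being the index accounting in the second step and the verification that the bi-Lipschitz regularity of $F$ is precisely enough to legitimize every step.
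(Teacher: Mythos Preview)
Your argument is correct and is the standard change-of-variables computation for variational problems under bi-Lipschitz push-forward. The paper does not actually supply a proof of this lemma; it simply cites \cite{hu2015nearly} and moves on, so there is nothing substantive to compare against beyond noting that your approach is the expected one and would be the same as in the cited reference.
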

The study on the invisible cloaking has been attracted the most attention
among theoretical and practical point of views. A region is cloaked
if its contents together with the cloak are invisible to wave detection.
A transformational cloaking using the invariance properties of the
conductivity equation was discovered by Greenleaf, Lassas and Uhlmann
\cite{greenleaf2003anisotropic,greenleaf2003nonuniqueness}. In \cite{leonhardt2006optical,pendry2006controlling},
the authors used the electromagnetic (EM) waves to make objects invisible.
The basic idea of cloaking is using the invariance of a coordinate
transformation for specific systems, such as conductivity, acoustic,
electromagnetic, and elasticity systems. We refer readers to the survey
articles \cite{chen2010acoustic,greenleaf2009cloaking,greenleaf2009invisibility,norris2008acoustic,uhlmann2014seenunseen}
in physics and mathematics literature for the cloaking theory and
their developments. The perfect cloaking can be obtained by the one-point-blowup
construction, but it will induce the transformed medium to be singular.
The singular structure arises a great challenge for mathematical analysis
and practical purposes for the cloaking theory. 

In order to handle the singular structure from the perfect cloaking
constructions, Greenleaf, Kurylev, Lassas and Uhlmann \cite{greenleaf2007full}
developed a truncation of singularities methods, which is called the
\textit{nearly cloaking} (or approximate cloaking) techniques. More
specifically, this nearly cloaking method is via a special singular
double coating to defeat the singular structure. In \cite{greenleaf2007improvement,greenleaf2008isotropic,ruan2007ideal},
the authors utilized this truncation of singularities methods to approach
the nearly cloaking theory. On the other hand, there are many researchers
have considered another methods, called the small-inclusion-blowup
construction, which regularized the singular medium which was induced
by the one-point-blowup construction for the perfect cloaking. The
small-inclusion-blowup method was studied by many researchers: \cite{ammari2013conductivity,kohn2008cloaking}
for the conductivity model, \cite{ammari2012enhancement,ammari2013Helmholtz,deng2015regularized,kohn2010cloaking,li2015regularized,liu2009virtual,liu2013near,liu2013enhanced}
for the acoustic model, \cite{ammari2013Maxwell,bao2014nearly,baojun2014nearly,deng2015full}
for the Maxwell model and \cite{hu2015nearly} for the Lamé model.
In \cite{kocyigit2013regular}, the authors also pointed out that
the truncation of singularity construction and the small-inclusion
construction are equivalent. Especially, we are interested about the
small-inclusion-blowup construction to approach our nearly cloaking
theory for the elasticity system with residual stress. In further,
there are some physics and mathematics literature for the elastic
cloaking theory, such as \cite{brun2009achieving,diatta2013cloaking,diatta2014controlling,milton2006cloaking,norris2008acoustic}. 

However, for the isotropic elastic waves, which is governed by the
Lamé system, is not invariance under a coordinate transformation,
i.e., the minor symmetric structure will break after a coordinate
transformation. This phenomena means that any elastic four tensor
with the major and minor symmetric properties, then the transformed
elastic tensor will not be minor symmetric anymore. Fortunately, for
the elastic tensor with residual stress possesses only the major symmetric
property but the minor symmetry breaks, see (\ref{eq:Symmetry for residual stress}),
(\ref{eq:transformed symmetry}). These relations imply that the elasticity
system with residual stress is invariant via a coordinate transformation
and the invariant property makes the elastic waves with residual stress
is more natural than the isotropic elastic waves in the physical sense.
It is hard to build up the cloaking theory for the elasticity system
with residual stress because there are no layer potential theories
for such system. Instead, we will derive suitable global estimates
for the solutions of the elasticity system with residual stress under
appropriate boundary traction conditions.

The paper is organized as follows. In Section 2, we introduce the
blowup constructions to achieve the perfect cloaking for our residual
stress model. In Section 3, we explain how to avoid the singular structure
of the elastic tensor with the residual stress by using the regularization
technique, which is the small-inclusion-blowup method. Meanwhile,
in order to avoid the non-uniqueness for the NtD map after a coordinate
transformation, we state our main results of the nearly cloaking theory
for the elasticity system with residual stress by demonstrating the
\textit{lossy layer} technique. In Section 4, we provide useful tools
and global estimates for this second order anisotropic elasticity
systems and complete the proof of our main theorem. Finally, in Appendix,
we give a glimpse review of the layer potential theory for the Lamé
system, which will be used in the proof of our main theorem.\\
\\
\textbf{Acknowledgments.} The author would like to thank Prof. Gunther
Uhlmann for suggesting this problem and providing useful advises.
Y.H. is a postdoctoral fellow at Institute for Advanced Study (IAS),
Jockey Club, HKUST, Hong Kong.

\section{Perfect cloaking for the elasticity system with residual stress }

In this section, we will construct the perfect cloaking for the elasticity
system with residual stress by using the one-point-blowup approach
to achieve our goal. For $N=2,3,$ let $\Omega\subset\mathbb{R}^{N}$
and $D\Subset\Omega$ be bounded and connected $C^{\infty}$-smooth
domains. Moreover, we also assume that $\Omega\backslash\overline{D}$
is connected and $D$ contains the origin in $\mathbb{R}^{N}$. For
any $h>0$, $D_{h}=\{hx:x\in D\}$ and let 
\[
\left\{ D_{\frac{1}{2}};\mathcal{C}^{(a)},\rho^{(a)}\right\} 
\]
be the target medium with $D_{\frac{1}{2}}$ denoting the region which
we want to cloak. 
\begin{defn}
We say the medium $\left\{ \Omega;\mathcal{C},\rho\right\} $ to be
\textit{regular} if $\mathcal{C}$ satisfies the strong convexity
condition (\ref{eq:Strong convexity}) and the major symmetric condition
(\ref{eq:Symmetry for residual stress}).
\end{defn}
In this section, we assume the medium $\left\{ D_{\frac{1}{2}};\mathcal{C}^{(a)},\rho^{(a)}\right\} $
to be arbitrary but regular and consider 
\[
\left\{ \Omega\backslash\overline{D_{\frac{1}{2}}};\mathcal{C}^{(c)},\rho^{(c)}\right\} 
\]
as a suitable layer of elastic medium, which is designed to be the
cloaking medium. Let 
\begin{equation}
\left\{ \Omega;\mathcal{C},\rho\right\} =\begin{cases}
\left\{ \Omega\backslash\overline{D_{\frac{1}{2}}};\mathcal{C}^{(c)},\rho^{(c)}\right\}  & \mbox{ in }\Omega\backslash\overline{D_{\frac{1}{2}}},\\
\left\{ D_{\frac{1}{2}};\mathcal{C}^{(a)},\rho^{(a)}\right\}  & \mbox{ in }D_{\frac{1}{2}},
\end{cases}\label{eq:perfect medium}
\end{equation}
be the medium occupying $\Omega$. We can define the associated NtD
map
\begin{equation}
\Lambda_{\mathcal{C},\rho}:H^{-1/2}(\partial\Omega)^{N}\to H^{1/2}(\partial\Omega)^{N}\label{eq:perfect NtD map}
\end{equation}
in the medium (\ref{eq:perfect medium}). In addition, the medium
$D_{\frac{1}{2}}$ and $\Omega\backslash\overline{D_{\frac{1}{2}}}$
will be specified appropriately in the forthcoming discussions whenever
it is necessary. If $-\kappa^{2}$ be not an eigenvalue of the elliptic
operator $\mathcal{L}_{0}^{R}$ with the zero boundary traction condition,
then there exists a unique solution $u_{0}\in H^{1}(\Omega)^{N}$
to 
\[
\begin{cases}
\mathcal{L}_{0}^{R}u_{0}+\kappa^{2}u_{0}=0 & \mbox{ in }\Omega,\\
\mathcal{N}_{\mathcal{C}^{(0)}}u_{0}=\phi & \mbox{ on }\partial\Omega,
\end{cases}
\]
and the corresponding NtD map $\Lambda_{0}:H^{-1/2}(\partial\Omega)^{N}\to H^{1/2}(\partial\Omega)^{N}$
is well-defined on the free reference space $\left\{ \Omega;\mathcal{C}^{(0)},1\right\} $
with $\Lambda_{0}\phi=u_{0}|_{\partial\Omega}$.
\begin{defn}
\label{def:perfect cloak}The medium $\left\{ \Omega\backslash\overline{D_{\frac{1}{2}}};\mathcal{C}^{(c)},\rho^{(c)}\right\} $
is said to be \textit{perfect (elastic) cloak} if $\Lambda_{\mathcal{C},\rho}(\phi)=\Lambda_{0}(\phi)$
for any $\phi\in H^{-1/2}(\partial\Omega)^{N}$.
\end{defn}
On the basis of Definition \ref{def:perfect cloak}, the cloaking
layer $\left\{ \Omega\backslash\overline{D_{\frac{1}{2}}};\mathcal{C}^{(c)},\rho^{(c)}\right\} $
makes itself and the target medium $\left\{ D_{\frac{1}{2}};\mathcal{C}^{(a)},\rho^{(a)}\right\} $
``invisible'' by the exterior elastic wave measurements. Now, we
want to analyze the singular structure from the mathematical viewpoint.
For $R>0$, let $B_{R}=\{x:|x|<R\}$ be a ball centered at the origin
and radius $R>0$. In this section, we take $\Omega=B_{2}$ and $D_{\frac{1}{2}}=B_{1}$
to demonstrate the analysis for the singular structures of the perfect
cloaking. Consider the singular transformation $F_{0}:B_{2}\backslash\{0\}\to B_{2}\backslash\overline{B_{1}}$
by 
\begin{equation}
F_{0}(x)=(1+\frac{|x|}{2})\frac{x}{|x|},\mbox{ for }0<|x|\leq2.\label{eq:singular transformation}
\end{equation}
It is easy to see that $F_{0}$ blows up at $0$ in the reference
space to $B_{1}$ and $F_{0}$ maps $B_{2}\backslash\{0\}$ to $B_{2}\backslash\overline{B_{1}}$
with $F_{0}|_{\partial B_{2}}=$Identity. Denoting $y:=F_{0}(x)$,
the transformed medium in $\left\{ B_{2}\backslash\overline{B_{1}};\mathcal{C}^{(c)},\rho^{(c)}\right\} $
can be represented as 
\[
\mathcal{C}^{(c)}(y)=(F_{0})_{*}(\mathcal{C}^{(0)}(x))|_{x=F_{0}^{-1}(y)}\mbox{ and }\rho^{(c)}(y)=(F_{0})_{*}(1)(x)|_{x=F_{0}^{-1}(y)},
\]
where $(F_{0})_{*}$ is the push-forward of $F_{0}$. Now, we consider
the boundary value problem (\ref{eq:Main Equations}) in $\Omega=B_{2}$
with 
\[
\left\{ B_{2};\mathcal{C},\rho\right\} =\begin{cases}
\left\{ B_{2}\backslash\overline{B_{1}};\mathcal{C}^{(0)},\rho^{(0)}\right\}  & \mbox{ in }B_{2}\backslash\overline{B_{1}},\\
\left\{ B_{1};\mathcal{C}^{(a)},\rho^{(a)}\right\}  & \mbox{ in }B_{1},
\end{cases}
\]
and the corresponding NtD map is $\Lambda_{\mathcal{C}.\rho}$, which
was given by (\ref{eq:perfect NtD map}), then we have 
\begin{prop}
(Perfect cloaking) We have 
\[
\Lambda_{0}=\Lambda_{\mathcal{C},\rho}\mbox{ on }\partial B_{2}.
\]
\end{prop}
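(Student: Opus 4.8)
The plan is to reduce the Proposition to an application of Lemma~\ref{lem:Invarant NtD} together with a careful bookkeeping of how the blowup transformation $F_0$ acts on the reference medium. Observe first that the map $F_0$ in \eqref{eq:singular transformation} is smooth and bi-Lipschitz on any annulus $B_2\setminus\overline{B_\varepsilon}$ away from the origin, and equals the identity on $\partial B_2$. Hence, for the punctured reference space $\{B_2\setminus\{0\};\mathcal{C}^{(0)},1\}$ restricted to $B_2\setminus\overline{B_\varepsilon}$, Lemma~\ref{lem:Invarant NtD} applies directly: a displacement field $u$ solves $\mathcal{L}_0^R u+\kappa^2 u=0$ in $B_2\setminus\overline{B_\varepsilon}$ if and only if $\widetilde u=u\circ F_0^{-1}$ solves $\widetilde\nabla\cdot(\mathcal{C}^{(c)}\widetilde\nabla\widetilde u)+\kappa^2\rho^{(c)}\widetilde u=0$ in $F_0(B_2\setminus\overline{B_\varepsilon})=B_2\setminus\overline{B_{1+\varepsilon/2}}$, and since $F_0$ is the identity on $\partial B_2$, the corresponding NtD maps on $\partial B_2$ coincide. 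The first main step is therefore to pass to the limit $\varepsilon\to 0$ and argue that the ``hole'' at the origin in the reference space is removable: a solution of the constant-coefficient system $\mathcal{L}_0^R u_0+\kappa^2 u_0=0$ in $B_2\setminus\{0\}$ lying in $H^1$ extends across the point singularity (a single point has zero $H^1$-capacity in dimension $N\geq 2$), so the NtD data of $u_0$ on $\partial B_2$ sees only the genuine reference solution on all of $B_2$, i.e.\ $\Lambda_0$.

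Next I would handle the contents of the cloaked region $B_1$. Because $\Lambda_{\mathcal{C},\rho}$ on $\partial B_2$ is defined through the transmission problem with $\{B_1;\mathcal{C}^{(a)},\rho^{(a)}\}$ inside and the pushed-forward cloaking medium $\{B_2\setminus\overline{B_1};\mathcal{C}^{(c)},\rho^{(c)}\}$ outside, I must show that the boundary measurement is insensitive to the target medium. The key is that the pushed-forward tensor $\mathcal{C}^{(c)}=(F_0)_*\mathcal{C}^{(0)}$ degenerates at $\partial B_1$: writing $M=DF_0$, one computes $\det M\to 0$ (and certain components of $\mathcal{C}^{(c)}$ blow up) as $y\to\partial B_1$ from outside, precisely the standard cloaking degeneracy. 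This degeneracy decouples the interior: any $\widetilde u\in H^1(B_2)^N$ solving the transmission problem has its exterior part equal to $\widetilde u_0=u_0\circ F_0^{-1}$ for the reference solution $u_0$, irrespective of the interior data. The argument here mirrors the one in \cite{greenleaf2007full,hu2015nearly}: test the weak formulation \eqref{eq:Variational Equation} against a cutoff field supported near $\partial B_1$ and use the degeneracy of $\mathcal{C}^{(c)}$ to conclude that the flux through $\partial B_1$ vanishes, so the exterior solution is determined solely by the boundary traction $\phi$ on $\partial B_2$ and the reference equation in $B_2\setminus\overline{B_1}$.

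Combining the two steps: given $\phi\in H^{-1/2}(\partial B_2)^N$, let $u_0$ be the reference solution with $\mathcal{N}_{\mathcal{C}^{(0)}}u_0=\phi$, so $\Lambda_0\phi=u_0|_{\partial B_2}$. By the invariance of the elasticity system with residual stress under $F_0$ (the push-forward rules together with the major-symmetry relations \eqref{eq:Symmetry for residual stress}, \eqref{eq:transformed symmetry}), $\widetilde u_0=u_0\circ F_0^{-1}$ solves the cloaking-medium equation in $B_2\setminus\overline{B_1}$ with the same traction $\phi$ on $\partial B_2$ (using $F_0|_{\partial B_2}=\mathrm{Id}$), and the degeneracy of $\mathcal{C}^{(c)}$ at $\partial B_1$ lets us glue in \emph{any} finite-energy extension into $B_1$ governed by $\{\mathcal{C}^{(a)},\rho^{(a)}\}$ without affecting $\widetilde u_0|_{\partial B_2}=u_0|_{\partial B_2}$. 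Hence $\Lambda_{\mathcal{C},\rho}\phi=u_0|_{\partial B_2}=\Lambda_0\phi$ for every $\phi$, which is the claim.

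I expect the main obstacle to be the rigorous justification of the interior decoupling in Step~2, i.e.\ showing that the singular pushed-forward tensor $\mathcal{C}^{(c)}$ still admits a well-posed weak formulation near $\partial B_1$ and that the energy flux across $\partial B_1$ genuinely vanishes. In the scalar/Lamé settings this is the crux of the perfect-cloaking argument and requires either a weighted Sobolev space analysis or an explicit estimate of the degenerate quadratic form $\int C^{(c)}_{iqkp}\partial_p\widetilde u_k\,\partial_q\overline{\widetilde v_i}$ near $\partial B_1$; here the absence of minor symmetry means one cannot directly invoke Korn's inequality, so I would instead exploit the strong convexity estimate \eqref{eq:Strong convexity after transformation}, which survives the transformation with a (degenerating) constant, to control the relevant boundary contribution. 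The finite-dimensional exceptional set $\mathcal{D}$ of frequencies must also be excluded so that all the NtD maps in play are well-defined, as already assumed in the statement preceding the Proposition.
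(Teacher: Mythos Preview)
Your proposal follows the same two-step skeleton as the paper's proof: (i) use the transformation invariance of Lemma~\ref{lem:Invarant NtD} under $F_0$ to identify $\Lambda_{\mathcal{C},\rho}$ with the NtD map $\widetilde{\Lambda_0}$ of the punctured reference medium $\{B_2\setminus\{0\};\mathcal{C}^{(0)},1\}$, and (ii) argue that removing a single point does not alter the NtD map, so $\widetilde{\Lambda_0}=\Lambda_0$. The paper's proof is literally these two sentences, with no further justification.

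The difference is one of rigor and scope. The paper treats the Proposition as a formal, ``easy and intuitive'' statement (its own words immediately after the proof) and does not attempt to justify either the application of Lemma~\ref{lem:Invarant NtD} to the singular map $F_0$ or the decoupling of the arbitrary interior medium $\{B_1;\mathcal{C}^{(a)},\rho^{(a)}\}$. You, by contrast, try to make both steps honest: the $\varepsilon\to 0$ limiting argument with the zero-capacity observation for Step~(ii), and the degeneracy/vanishing-flux argument for the interior decoupling in Step~(i). You are right that the latter is the genuine analytical obstacle, and the paper agrees implicitly: it immediately points out that $\mathcal{C}^{(c)}$ fails strong convexity at $\partial B_1$, declares the singular construction ``not useful for the mathematical analysis,'' and moves on to the regularized nearly-cloaking scheme, which is the actual content of the paper. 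So your extra work is not wasted, but be aware that in the paper's logic this Proposition is meant only as motivation, not as a theorem proved to full rigor.
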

\begin{proof}
Let $\widetilde{\Lambda_{0}}$ be the NtD map associated with the
medium $\left\{ B_{2}\backslash\{0\};\mathcal{C}^{(0)},1\right\} $.
By Lemma \ref{lem:Invarant NtD}, we have 
\[
\widetilde{\Lambda_{0}}=\Lambda_{\mathcal{C},\rho}\mbox{ on }\partial B_{2}.
\]
Note that the inhomogeneity of $\left\{ \Omega\backslash\{0\};\mathcal{C}^{(0)},1\right\} $
is only supported in a singular point $\{0\}$, then we have $\widetilde{\Lambda_{0}}=\Lambda_{0}$
and complete the proof.
\end{proof}
The above result for the perfect elastic cloaking is easy and intuitive.
Note that the singular transformation $F_{0}$ will force the elastic
tensor $\mathcal{C}^{(c)}$ not to satisfy the strong convexity condition
(\ref{eq:Strong convexity}). Indeed, Hu and Liu \cite{hu2015nearly}
gave explicit analysis for the singular structures of the transformed
elastic parameters $\mathcal{C}^{(c)}$ and $\rho^{(c)}$, so we omit
the arguments and refer readers to Section 3 in \cite{hu2015nearly}
for further and complete discussions. 

On the other hand, from practical aspects, we can calculate the elastic
tensor with residual stress under the radial case by using the polar
coordinates $(r,\theta)$ in 2D and the spherical coordinates $(r,\theta,\varphi)$
in 3D. In order to calculate the transformed elastic parameter $\mathcal{C}^{(c)}$
(\ref{eq:Lame constants}) explicitly, for convenience, we simplify
indexes by $1\to r$, $2\to\theta$, $r\to\varphi$ and use the \textit{Voigt}
notation for tensor indices, which are 
\begin{align*}
 & 11\to1,\mbox{ }22\to2\mbox{ }33\to3,\\
 & 23,32\to4,\mbox{ }13,31\to5,\mbox{ }12,21\to6,\mbox{ }
\end{align*}
then the residual stress tensor (\ref{eq:Lame constants}) can be
written as 
\[
C_{ab}=\Lambda_{ab}+\mathcal{T}_{ab}\mbox{ for }1\leq a,b\leq N\mbox{ and }N=2,3.
\]
More explicitly, when $N=2$, 
\begin{equation}
\Lambda_{ab}=\left(\begin{array}{ccc}
\lambda^{(0)}+2\mu^{(0)} & \lambda^{(0)} & 0\\
\lambda^{(0)} & \lambda^{(0)}+2\mu^{(0)} & 0\\
0 & 0 & \mu^{(0)}
\end{array}\right)\label{eq:2D isotropic elastic tensor}
\end{equation}
and 
\begin{equation}
\mathcal{T}_{ab}=\left(\begin{array}{ccc}
t_{11} & 0 & t_{12}\\
0 & t_{22} & t_{12}\\
t_{12} & t_{12} & t_{11}+t_{22}
\end{array}\right)(x).\label{eq:2D residual stress}
\end{equation}
When $N=3$, 
\[
\Lambda_{ab}=\left(\begin{array}{cccccc}
\lambda^{(0)}+2\mu^{(0)} & \lambda^{(0)} & \lambda^{(0)} & 0 & 0 & 0\\
\lambda^{(0)} & \lambda^{(0)}+2\mu^{(0)} & \lambda^{(0)} & 0 & 0 & 0\\
\lambda^{(0)} & \lambda^{(0)} & \lambda^{(0)}+2\mu^{(0)} & 0 & 0 & 0\\
0 & 0 & 0 & \mu^{(0)} & 0 & 0\\
0 & 0 & 0 & 0 & \mu^{(0)} & 0\\
0 & 0 & 0 & 0 & 0 & \mu^{(0)}
\end{array}\right)
\]
and 
\[
\mathcal{T}_{ab}=\left(\begin{array}{cccccc}
t_{11} & 0 & 0 & 0 & t_{13} & t_{12}\\
0 & t_{22} & 0 & t_{23} & 0 & t_{12}\\
0 & 0 & t_{33} & t_{23} & t_{13} & 0\\
0 & t_{23} & t_{23} & t_{22}+t_{33} & t_{12} & t_{13}\\
t_{13} & 0 & t_{13} & t_{12} & t_{11}+t_{33} & t_{23}\\
t_{12} & t_{12} & 0 & t_{13} & t_{23} & t_{11}+t_{22}
\end{array}\right)(x).
\]
Furthermore, let $DF_{0}$ be the Jacobian matrix of $F_{0}$, for
the radial case $r=|y|$, we have 
\begin{equation}
DF_{0}=\left(\begin{array}{cc}
\frac{1}{2} & 0\\
0 & \frac{r}{2(r-1)}I_{N-1}
\end{array}\right),\label{eq:polar coor representation}
\end{equation}
where $I_{N-1}$ is an $(N-1)\times(N-1)$ identity matrix for $N=2,3$
and for any $r\in(1,2)$. Now, we can use the following transformation
formula 
\begin{equation}
\left.\mathcal{C}^{(c)}(y)=\dfrac{DF_{0}(x)\XBox\mathcal{C}^{(0)}(x)\XBox DF_{0}^{t}(x)}{\det DF_{0}(x)}\right|_{x=F_{0}^{-1}(y)}\mbox{ (}t\mbox{ for transpose)},\label{eq:transformation formula}
\end{equation}
where $\XBox$ is the multiplication between a matrix and fourth rank
tensor and 
\[
\det DF_{0}=\begin{cases}
\frac{r}{4(r-1)} & \mbox{ for }N=2,\\
\frac{r^{2}}{8(r-1)^{2}} & \mbox{ for }N=3.
\end{cases}
\]
More explicitly, for $\mathcal{C}^{(c)}(y)=(C_{ijkl}^{(c)}(y))_{i,j,k,l=1}^{N}$,
(\ref{eq:transformation formula}) is equivalent to 
\begin{align}
 & \left(\begin{array}{cccc}
C_{i1k1}^{(c)} & C_{i1k2}^{(c)} & \cdots & C_{i1kN}^{(c)}\\
C_{i2k1}^{(c)} & C_{i2k2}^{(c)} & \cdots & C_{i2kN}^{(c)}\\
\vdots & \vdots & \cdots & \vdots\\
C_{iNk1}^{(c)} & C_{iNk2}^{(c)} & \cdots & C_{iNkN}^{(c)}
\end{array}\right)\label{eq:explicit matrix transform}\\
= & \dfrac{1}{\det DF_{0}}DF_{0}\left(\begin{array}{cccc}
C_{i1k1}^{(0)} & C_{i1k2}^{(0)} & \cdots & C_{i1kN}^{(0)}\\
C_{i2k1}^{(0)} & C_{i2k2}^{(0)} & \cdots & C_{i2kN}^{(0)}\\
\vdots & \vdots & \cdots & \vdots\\
C_{iNk1}^{(0)} & C_{iNk2}^{(0)} & \cdots & C_{iNkN}^{(0)}
\end{array}\right)DF_{0}^{t}\nonumber 
\end{align}
for $i,k=1,2,\cdots,N$. Making use of the polar coordinate, (\ref{eq:2D isotropic elastic tensor}),
(\ref{eq:2D residual stress}), (\ref{eq:polar coor representation})
and (\ref{eq:explicit matrix transform}), we can represent the transformed
tensor $\mathcal{C}^{(c)}(y)=\mathcal{C}^{(c)}(r)$ for $N=2$ in
the following by straightforward calculation, there are twelve nontrivial
entries for the residual stress systems: 
\begin{align*}
 & C_{rrrr}^{(c)}=(\lambda^{(0)}+2\mu^{(0)}+t_{11}(F_{0}^{-1}(y)))\frac{r-1}{r},\mbox{ }C_{\theta r\theta r}^{(c)}=(\mu^{(0)}+t_{11}(F_{0}^{-1}(y)))\frac{r}{r-1},\\
 & C_{rrr\theta}^{(c)}=C_{r\theta rr}^{(c)}=\frac{r-1}{r}t_{12}(F_{0}^{-1}(y)),\mbox{ }C_{\theta r\theta\theta}^{(c)}=C_{\theta\theta\theta r}^{(c)}=\frac{r}{r-1}t_{12}(F_{0}^{-1}(y)),\\
 & C_{rr\theta\theta}^{(c)}=C_{\theta\theta rr}^{(0)}=\lambda^{(0)},\mbox{ }C_{r\theta\theta r}^{(c)}=C_{\theta rr\theta}^{(c)}=\mu^{(0)},\\
 & C_{r\theta r\theta}^{(c)}=(\mu^{(0)}+t_{22}(F_{0}^{-1}(y)))\frac{r-1}{r},\mbox{ }C_{\theta\theta\theta\theta}^{(c)}=(\lambda^{(0)}+2\mu^{(0)}+t_{22}(F_{0}^{-1}(y)))\frac{r}{r-1}.
\end{align*}
Note that the strong convexity condition (\ref{eq:Strong convexity})
for $N=2$, we have 
\[
2\lambda^{(0)}+2\mu^{(0)}+t_{22}(F_{0}^{-1}(y))\geq c_{0}>0\mbox{ for all }y\in B_{2}\backslash\overline{B_{1}}.
\]
Hence, it is easy to see that 
\[
\lambda^{(0)}+2\mu^{(0)}+t_{22}(F_{0}^{-1}(y))>0\mbox{ and }\mu^{(0)}+t_{11}(F_{0}^{-1}(y))>0\mbox{ for any }y\in B_{2}\backslash\overline{B_{1}}.
\]
and these imply that $C_{\theta r\theta r}^{(c)},C_{\theta\theta\theta\theta}^{(c)}$
have singularities on $\partial B_{1}$. On the other hand, by $T\in W^{2,\infty}(B_{2})$,
we can derive that $C_{rrrr}^{(c)},C_{r\theta r\theta}^{(c)}$ vanish
on $\partial B_{1}$ and. Similar calculation will be valid for the
transformed tensor $\mathcal{C}^{(c)}$ for $N=3$ under the spherical
coordinates, so we skip details here. For more discussions of the
transformed elastic tensors, we refer readers to \cite{brun2009achieving,diatta2014controlling,hu2015nearly}.

Based on the blowup construction, we can obtain the perfect cloaking
for the elasticity system with residual stress. However, the elastic
materials possess singular structure, so the one-point-blowup construction
is not useful for the mathematical analysis and practical applications.

\section{Nearly cloaking construction and main result}

The perfect cloaking for the elasticity system with residual stress
produces singular parameters. In order to avoid singularities, we
introduce the nearly cloaking in this section via the small-inclusion-blowup
construction and the \textit{lossy layer }techniques as follows.

\subsection{Small-inclusion-blowup and non-uniqueness for NtD maps}

In the beginning of this section, we demonstrate our nearly-cloaking
algorithm by considering the radial case. For small $h>0$, let $\widehat{F_{h}}:B_{2}\backslash\overline{B_{h}}\to B_{2}\backslash\overline{B_{1}}$
and $\widehat{F_{h}}|_{\partial B_{2}}=$Identity be a transformation
given by 
\[
\widehat{F_{h}}(x):=\left(\frac{2-2h}{2-h}+\frac{|x|}{2-h}\right)\frac{x}{|x|}.
\]
Consider the following transformation 
\begin{equation}
F_{h}(x):=\begin{cases}
\widehat{F_{h}}(x) & \mbox{ for }h\leq|x|\leq2,\\
\frac{x}{h} & \mbox{ for }|x|<h.
\end{cases}\label{eq:approximate transformation}
\end{equation}
It is obvious that that and $F_{h}:B_{2}\to B_{2}$ is a bi-Lipschitz,
orientation-preserving and $F_{h}|_{\partial B_{2}}=$Identity. By
using the same small-inclusion-blowup construction as in \cite{hu2015nearly},
we have the following form: 
\begin{equation}
\left\{ B_{2};\mathcal{C},\rho\right\} =\begin{cases}
\left\{ B_{2}\backslash\overline{B_{1}};\mathcal{C}_{h}^{(c)},\rho_{h}^{(c)}\right\}  & \mbox{ in }B_{2}\backslash\overline{B_{1}},\\
\left\{ B_{1};\mathcal{C}^{(a)},\rho^{(a)}\right\}  & \mbox{ in }B_{1},
\end{cases}\label{eq:Near Cloaking 1}
\end{equation}
with the cloaking medium given by 
\[
\mathcal{C}_{h}^{(c)}:=(F_{h})_{*}(\mathcal{C}^{(0)})(x)|_{x=F_{h}^{-1}(y)},\mbox{ }\rho_{h}^{(c)}:=(F_{h})_{*}(1)(x)|_{x=F_{h}^{-1}(y)},\mbox{ }y\in B_{2}\backslash\overline{B_{1}}.
\]
Let $\Lambda_{h}$ be the NtD map associated with the configuration
(\ref{eq:Near Cloaking 1}). It is easy to see that $F_{h}\to F_{0}$
as $h\to0$, where $F_{0}$ is the singular transformation given by
(\ref{eq:singular transformation}), so we expect $\Lambda_{h}\to\Lambda_{0}$
as $h\to0$. However, in the medium (\ref{eq:Near Cloaking 1}), NtD
map $\Lambda_{h}$ may not be well-defined and we will explain more
in the following. For $h>0$ small, we write 
\[
\left\{ B_{2};\widetilde{\mathcal{C}},\widetilde{\rho}\right\} :=(F_{h}^{-1})_{*}\left\{ B_{2};\mathcal{C},\rho\right\} =\begin{cases}
\left\{ B_{2}\backslash\overline{B_{h}};\mathcal{C}^{(0)},1\right\}  & \mbox{ in }B_{2}\backslash\overline{B_{h}},\\
\left\{ B_{h};\widetilde{\mathcal{C}}^{(a)},\widetilde{\rho}^{(a)}\right\}  & \mbox{ in }B_{h},
\end{cases}
\]
where $\left\{ B_{h};\widetilde{\mathcal{C}}^{(a)},\widetilde{\rho}^{(a)}\right\} =(F_{h}^{-1})_{*}\left\{ B_{1};\mathcal{C}^{(a)},\rho^{(a)}\right\} $. 

Since the target medium $\left\{ B_{1};\mathcal{C}^{(a)},\rho^{(a)}\right\} $
is arbitrary but regular, by (\ref{eq:Strong convexity after transformation}),
the small inclusion $\left\{ B_{h};\widetilde{\mathcal{C}}^{(a)},\widetilde{\rho}^{(a)}\right\} $
is arbitrary but regular, which means we avoid the singular structure
by considering the transformation $F_{h}$. By Lemma \ref{lem:Invarant NtD}
again, $\widetilde{\Lambda_{h}}=\Lambda_{h}$, where $\widetilde{\Lambda_{h}}$
is the transformed NtD map associated to the medium $\left\{ B_{2};\widetilde{\mathcal{C}},\widetilde{\rho}\right\} $
and $\widetilde{\Lambda_{h}}$ may not be well-defined. 

Note that the nearly cloaking construction (\ref{eq:Near Cloaking 1})
via the transformation (\ref{eq:approximate transformation}) might
fail because of the corresponding NtD map $\widetilde{\Lambda_{h}}$
may not be well-defined. More specifically, for $x\in\mathbb{R}^{N}$,
$N=2,3$ and for any $0<r_{0}<r_{1}$, we can consider the following
elasticity system with residual stress 
\begin{equation}
\begin{cases}
\nabla\cdot(\mathcal{C}^{(0)}\nabla u_{1})+\kappa^{2}\rho_{1}u_{1}=0 & \mbox{ in }r_{0}<|x|<r_{1},\\
\nabla\cdot(\mathcal{C}^{(0)}\nabla u_{0})+\kappa^{2}\rho_{0}u_{0}=0 & \mbox{ in }|x|<r_{0},\\
\mathcal{N}_{\mathcal{C}^{(0)}}u_{1}=0 & \mbox{ on }|x|=r_{1},\\
u_{0}=u_{1}\mbox{ and }\mathcal{N}_{\mathcal{C}^{(0)}}u_{0}=\mathcal{N}_{\mathcal{C}^{(0)}}u_{1} & \mbox{ on }|x|=r_{0},
\end{cases}\label{eq:Transmission problem}
\end{equation}
where $\mathcal{C}^{(0)}$ is the residual stress tensor defined by
(\ref{eq:Lame constants}), $\kappa>0$ is a fixed frequency. By varying
two positive constants $\rho_{0},\rho_{1}$, then there are two cases
might happen: 
\begin{enumerate}
\item \label{enu:Case 1}\textit{Non-resonance effect}: For some constants
$\rho_{0},\rho_{1}>0$, there exists a unique solution $(u_{0},u_{1})=(0,0)$
to (\ref{eq:Transmission problem}), then the corresponding NtD map
on $|x|=r_{1}$ is well-defined (since $(u_{0},u_{1})=(0,0)$ is a
trivial solution to (\ref{eq:Transmission problem})).
\item \label{enu:Case 2}\textit{Resonance effect}: For some constants $\rho_{0},\rho_{1}>0$,
there exists a nontrivial solution $(u_{0},u_{1})\neq(0,0)$ to (\ref{eq:Transmission problem}),
which implies the corresponding NtD map on $|x|=r_{1}$ is not well-defined. 
\end{enumerate}
The resonance effect will make the corresponding NtD map be not well-defined,
which causes the nearly cloaking construction to fail under this situation.
However, no matter what cases (resonance or non-resonance effects)
occurs, we can make use of the \textit{lossy layer} method to guarantee
the corresponding NtD map on the boundary to be well-defined. Thus,
we can make the nearly cloaking for the elasticity system with residual
stress can be constructed successfully. Therefore, the lossy layer
technique is a necessary method to make our near-cloaking construction
work.

\subsection{Lossy layer techniques}

In order to overcome the resonance effects, we introduce a certain
damping mechanism, which means that we build a lossy layer between
the cloaking region and the cloaked area. Recall that we have assumed
$D\Subset\Omega$ be $C^{\infty}$-smooth domains in $\mathbb{R}^{N}$
with $D$ containing the origin and $\Omega\backslash\overline{D}$
is connected for $N=2,3$. Let $h>0$ be small, we consider the orientation-preserving
and bi-Lipschitz map $F_{h}$ with 
\[
\widehat{F_{h}}:\overline{\Omega}\backslash D_{h}\to\overline{\Omega}\backslash D\mbox{ and }\widehat{F_{h}}|_{\partial\Omega}=\mbox{Identity},
\]
where $D_{h}=\{hx:x\in D\}$ and define the mapping  $F$ by 
\[
F_{h}(x)=\begin{cases}
\widehat{F_{h}}(x) & \mbox{ for }x\in\overline{\Omega}\backslash D_{h},\\
\frac{x}{h} & \mbox{ for }x\in D_{h}.
\end{cases}
\]
It is easy to see that $F_{h}:\Omega\to\Omega$ is an orientation-preserving
and bi-Lipschitz map with $F_{h}|_{\partial\Omega}=$Identity. 

As in Section 2, $\left\{ D_{\frac{1}{2}};\mathcal{C}^{(a)},\rho^{(a)}\right\} $
is the medium that we want to cloak. Consider the elastic medium with
residual stress as follows: 
\begin{equation}
\{\Omega;\mathcal{C},\rho\}=\begin{cases}
\left\{ \Omega\backslash\overline{D_{\frac{1}{2}}};\mathcal{C}^{(c)},\rho^{(c)}\right\}  & \mbox{ in }\Omega\backslash\overline{D_{\frac{1}{2}}},\\
\left\{ D_{\frac{1}{2}};\mathcal{C}^{(a)},\rho^{(a)}\right\}  & \mbox{ in }D_{\frac{1}{2}}.
\end{cases}\label{eq:Near cloaking with lossy}
\end{equation}
where 
\begin{equation}
\{\Omega\backslash\overline{D_{\frac{1}{2}}};\mathcal{C}^{(c)},\rho^{(c)}\}=\begin{cases}
\left\{ \Omega\backslash\overline{D};\mathcal{C}^{(1)},\rho^{(1)}\right\}  & \mbox{ in }\Omega\backslash\overline{D},\\
\left\{ D\backslash\overline{D_{\frac{1}{2}}};\mathcal{C}^{(2)},\rho^{(2)}\right\}  & \mbox{ in }D\backslash\overline{D_{\frac{1}{2}}},
\end{cases}\label{eq:Near Cloaking 2}
\end{equation}
and 
\begin{eqnarray}
\left\{ \Omega\backslash\overline{D};\mathcal{C}^{(1)},\rho^{(1)}\right\}  & = & (F_{h})_{*}\left\{ \Omega\backslash\overline{D_{h}};\mathcal{C}^{(0)},1\right\} ,\label{eq:Near Cloaking 3}\\
\left\{ D\backslash\overline{D_{\frac{1}{2}}};\mathcal{C}^{(2)},\rho^{(2)}\right\}  & = & (F_{h})_{*}\left\{ D_{h}\backslash\overline{D_{\frac{h}{2}}};\widetilde{\mathcal{C}}^{(2)},\widetilde{\rho}^{(2)}\right\} .\label{eq:Near Cloaking 4}
\end{eqnarray}
Note that the elastic medium in $D_{h}\backslash\overline{D_{\frac{h}{2}}}$
is designed by 
\begin{equation}
\left\{ D_{h}\backslash\overline{D_{\frac{h}{2}}};\widetilde{\mathcal{C}}^{(2)},\widetilde{\rho}^{(2)}\right\} ,\mbox{ }\widetilde{\mathcal{C}}^{(2)}=\gamma h^{2+\delta}\widetilde{\mathcal{C}}^{(0)}\mbox{ and }\widetilde{\rho}^{(2)}=\alpha+i\beta,\label{eq:Near Cloaking 5}
\end{equation}
where $\alpha,\beta,\gamma$ and $\delta$ are given positive constants
and $\left\{ D_{h}\backslash\overline{D_{\frac{h}{2}}};\widetilde{\mathcal{C}}^{(2)},\widetilde{\rho}^{(2)}\right\} $
is our critical lossy layer. Note the number $\beta>0$ is the \textit{damping}
parameter of the medium, which guarantees the well-defined property
for the NtD map with respect the medium $\left\{ \Omega;\mathcal{C},\rho\right\} $
defined by (\ref{eq:Near cloaking with lossy})-(\ref{eq:Near Cloaking 5}).
Note that the selection of parameters in (\ref{eq:Near Cloaking 5})
plays the crucial role for the construction of the lossy layer.

Now we consider the boundary value problem 
\begin{equation}
\begin{cases}
\nabla\cdot(\mathcal{C}\nabla u)+\kappa^{2}\rho(x)u=0 & \mbox{ in }\Omega,\\
\mathcal{N}_{\mathcal{C}}u=\phi\in H^{-\frac{1}{2}}(\partial\Omega)^{3} & \mbox{ on }\partial\Omega,
\end{cases}\label{eq:Near cloaking BVP}
\end{equation}
where $\left\{ \Omega;\mathcal{C},\rho\right\} $ was introduced by
(\ref{eq:Near cloaking with lossy})-(\ref{eq:Near Cloaking 5}). 

We can state our main result for the nearly cloaking of the elasticity
system with residual stress.
\begin{thm}
\label{thm:Main}Suppose $-\kappa^{2}$ is not an eigenvalue of the
elliptic operator $\mathcal{L}_{0}^{R}$ on $\partial\Omega$ with
the zero boundary traction condition $\mathcal{N}_{\mathcal{C}^{(0}}u=0$
on $\partial\Omega$. Let $\Lambda_{\mathcal{C},\rho}$ and $\Lambda_{0}$
the NtD maps of (\ref{eq:Near cloaking BVP}) and (\ref{eq:Homogeneous Equation}),
respectively. Then there exists $h^{(0)}>0$ such that for any $h<h^{(0)}$,
\[
\|\Lambda_{\mathcal{C},\rho}-\Lambda_{0}\|_{\mathscr{L}(H^{-1/2}(\partial\Omega)^{N},H^{1/2}(\partial\Omega)^{N})}\leq Ch,
\]
where $\|\cdot\|_{\mathscr{L}(H^{-1/2}(\partial\Omega)^{N},H^{1/2}(\partial\Omega)^{N})}$
denotes the operator norm from $H^{-1/2}(\partial\Omega)^{N}$ to
$H^{1/2}(\partial\Omega)^{N}$ and $C>0$ is a constant independent
of $h,C^{(a)},\rho^{(a)}$ and $\delta$.
\end{thm}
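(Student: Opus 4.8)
The plan is to use the transformation invariance (Lemma \ref{lem:Invarant NtD}) to reduce the cloaking estimate to an estimate on the ``unfolded'' medium $\{\Omega;\widetilde{\mathcal{C}},\widetilde{\rho}\}$ living on $\Omega$, where the perturbation from the reference configuration $\{\Omega;\mathcal{C}^{(0)},1\}$ is concentrated in the small region $D_h$. Concretely, since $F_h|_{\partial\Omega}=\mathrm{Identity}$, Lemma \ref{lem:Invarant NtD} gives $\Lambda_{\mathcal{C},\rho}=\widetilde{\Lambda}_h$, where $\widetilde{\Lambda}_h$ is the NtD map for $\widetilde{\mathcal{C}}=\mathcal{C}^{(0)}$ on $\Omega\setminus\overline{D_h}$, with the lossy inclusion $\widetilde{\mathcal{C}}^{(2)}=\gamma h^{2+\delta}\widetilde{\mathcal{C}}^{(0)}$, $\widetilde{\rho}^{(2)}=\alpha+i\beta$ in $D_h\setminus\overline{D_{h/2}}$, and the regular inclusion $\{D_{h/2};\widetilde{\mathcal{C}}^{(a)},\widetilde{\rho}^{(a)}\}$ inside. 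So the real content is: the NtD map of a medium that equals $\{\mathcal{C}^{(0)},1\}$ outside a ball of radius $\sim h$, and is arbitrary-but-regular (plus a damping layer) inside, differs from $\Lambda_0$ by $O(h)$, uniformly in the inclusion data.

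First I would record the well-posedness on the unfolded side: thanks to the damping layer with $\mathrm{Im}\,\widetilde{\rho}^{(2)}=\beta>0$, the sesquilinear form $\mathcal{B}_{\widetilde{\mathcal{C}}}(\cdot,\cdot)-\kappa^2(\widetilde{\rho}\cdot,\cdot)$ satisfies a uniqueness statement (any solution with zero Neumann data must vanish, by taking imaginary parts of the energy identity on $D_h\setminus\overline{D_{h/2}}$ and then unique continuation / Holmgren for the residual-stress operator, which is elliptic with real principal part $\mathcal{L}^R_0$), so $\widetilde{\Lambda}_h$ is well-defined for all $h$ — this is exactly the point of the lossy layer and must be made quantitative enough that the constant does not blow up. Next, given $\phi\in H^{-1/2}(\partial\Omega)^N$ let $u_0$ solve (\ref{eq:Homogeneous Equation}) and $\widetilde{u}$ solve the unfolded problem with the same $\phi$; set $w=\widetilde{u}-u_0$. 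Then $\nabla\cdot(\mathcal{C}^{(0)}\nabla w)+\kappa^2 w=0$ in $\Omega\setminus\overline{D_h}$ with zero Neumann data on $\partial\Omega$, and the mismatch is supported in $D_h$. The key step is a global a priori estimate for the residual-stress system: by comparing $\mathcal{L}^R_0$ with the Lamé operator $\mathcal{L}_0$ (treating the residual-stress terms $t_{jl}(x)\delta_{ik}$, which are lower order after using (\ref{eq:Divergence free}) and (\ref{eq:zero traction boundary}), as a perturbation and absorbing them via Gårding plus the weak identity $\int_\Omega T\cdot\nabla v\,dx=0$), one obtains $\|w\|_{H^1(\Omega)}\le C(\|w\|_{H^1(D_h)}+\|\mathcal{N}_{\mathcal{C}^{(0)}}w\|_{H^{-1/2}(\partial D_h)})$-type control, and then standard layer-potential / energy bounds on the annulus $\Omega\setminus\overline{B_{ch}}$ (using the Appendix's Lamé layer potential theory as the comparison tool, since there is none for the residual-stress system) give that $u_0$ restricted near $D_h$ is $O(h^{N/2})$ in the relevant norms while the inclusion contributes, after rescaling $x\mapsto x/h$ to a fixed domain $D$, a correction of size $O(h^{N})$ or $O(h^{2+\delta+N-2})$ from the damping layer; balancing these against the $H^{1/2}(\partial\Omega)$ trace through the exterior estimate yields the bound $O(h)$. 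Finally, polarizing, i.e. testing $w$ against another solution, converts the $H^1$ interior smallness into the operator-norm estimate $\|\Lambda_{\mathcal{C},\rho}-\Lambda_0\|\le Ch$ with $C$ independent of $h,\mathcal{C}^{(a)},\rho^{(a)},\delta$.

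The main obstacle, and the place where the paper's own contribution lies, is the \emph{global elliptic estimate for the residual-stress system with constants uniform in the small parameter}: unlike the Lamé case there is no layer-potential calculus available, so one must prove directly (Section 4) an estimate of the form $\|v\|_{H^1(\Omega\setminus\overline{D_h})}\le C\big(\|v\|_{L^2(\Omega\setminus\overline{D_h})}+\|\mathcal{N}_{\mathcal{C}^{(0)}}v\|_{H^{-1/2}(\partial D_h)}+\|v\|_{H^{1/2}(\partial D_h)}\big)$ for solutions of $\mathcal{L}^R_0 v+\kappa^2 v=0$ with zero Neumann data on $\partial\Omega$, \emph{with $C$ independent of $h$} — the $h$-uniformity requires either a scaling argument on the annular region or a careful tracking of how the residual-stress perturbation interacts with the boundary conditions (\ref{eq:Divergence free})--(\ref{eq:zero traction boundary}) under dilation. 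Once that estimate is in hand, together with the quantitative well-posedness coming from the damping parameter $\beta$, the rest is the standard near-cloaking bookkeeping: rescale the inclusion to unit size, use that the reference solution $u_0$ is smooth near the origin hence $\|u_0\|_{H^1(D_h)}=O(h^{N/2})$, and propagate the smallness to the boundary. I would also need to check that the exceptional set of frequencies is avoided uniformly — the hypothesis that $-\kappa^2$ is not a Neumann eigenvalue of $\mathcal{L}^R_0$ on $\Omega$ handles the limiting problem, and a perturbation argument (again using the $h$-uniform estimate) shows the perturbed problems stay invertible for $h<h^{(0)}$.
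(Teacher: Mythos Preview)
Your proposal is essentially correct and follows the paper's approach: reduce via Lemma~\ref{lem:Invarant NtD} to the unfolded medium $\{\Omega;\widetilde{\mathcal{C}},\widetilde{\rho}\}$, use the lossy layer (the imaginary part $\beta>0$) to secure well-posedness and an energy bound in $D_h\setminus\overline{D_{h/2}}$, and then compare the residual-stress operator $\mathcal{L}_0^R$ with the Lam\'e operator $\mathcal{L}_0$ so that the Appendix's layer-potential estimates can be imported; this is exactly the content of Theorem~\ref{thm:Main 2}, Proposition~\ref{prop:(Key-estimate)}, and Lemmas~4.5--4.6. Two small corrections: the residual-stress term $\nabla\cdot(T\nabla u)$ is \emph{not} lower order---in Proposition~\ref{prop:(Key-estimate)} it is handled as a genuine second-order source term, controlled after one integration by parts (using (\ref{eq:zero traction boundary})) by $\|\nabla w_0\|_{L^2}\|\nabla\widehat{U_0}\|_{L^2}$ and absorbed via Young's inequality; and the final step is not a polarization but a direct bootstrap, combining the bound $\|\Phi^+\|_{H^{-3/2}(\partial D)}^2\lesssim h^{-2-N}\|\widetilde{u}-u_0\|_{H^{1/2}(\partial\Omega)}\|\phi\|_{H^{-1/2}}$ from Lemma~\ref{lem:Boundary estimate on cloaked layer} with $\|\widetilde{u}-u_0\|_{H^{1/2}(\partial\Omega)}\lesssim h\|\phi\|+h^{N-1}\|\Phi^+\|$ from (\ref{eq:Final perturbed estimate}) and closing with Young's inequality.
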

In order to prove Theorem \ref{thm:Main}, notice that by using Lemma
\ref{lem:Invarant NtD}, then we have 
\begin{equation}
\Lambda_{\mathcal{C},\rho}=\Lambda_{\widetilde{C},\widetilde{\rho}},\label{eq:Invariant NtD map}
\end{equation}
where $\widetilde{C}=(F_{h}^{-1})_{*}$$\mathcal{C}$, $\widetilde{\rho}=(F_{h}^{-1})_{*}\rho$
and 
\[
\{\Omega;\widetilde{\mathcal{C}},\widetilde{\rho}\}=\begin{cases}
\mathcal{C}^{(0)},1 & \mbox{ in }\Omega\backslash\overline{D_{h}},\\
\widetilde{\mathcal{C}}^{(2)},\widetilde{\rho}^{(2)} & \mbox{ in }D_{h}\backslash\overline{D_{\frac{h}{2}}},\\
\widetilde{\mathcal{C}}^{(a)},\widetilde{\rho}^{(a)} & \mbox{ in }D_{\frac{h}{2}},
\end{cases}
\]
with $\widetilde{\mathcal{C}}^{(a)}=(F_{h}^{-1})_{*}\mathcal{C}^{(a)}$,
$\widetilde{\rho}^{(a)}=(F_{h}^{-1})_{*}\rho^{(a)}$ and $\widetilde{\mathcal{C}}^{(2)},\widetilde{\rho}^{(2)}$
were introduced in (\ref{eq:Near Cloaking 5}).

Via Lemma \ref{eq:Variational Equation}, if $u\in H^{1}(\Omega)^{N}$
solves the following elasticity system with residual stress
\[
\begin{cases}
\nabla\cdot(\mathcal{C}\nabla u)+\kappa^{2}\rho u=0 & \mbox{ in }\Omega,\\
\mathcal{N}_{\mathcal{C}}u=\phi & \mbox{ on }\partial\Omega,
\end{cases}
\]
then $\widetilde{u}:=u\circ F_{h}^{-1}$ is the solution of 
\begin{equation}
\begin{cases}
\widetilde{\nabla}\cdot(\widetilde{\mathcal{C}}\widetilde{\nabla}\widetilde{u})+\kappa^{2}\widetilde{\rho}\widetilde{u}=0 & \mbox{ in }\Omega,\\
\mathcal{N}_{\widetilde{\mathcal{C}}}\widetilde{u}=\phi & \mbox{ on }\partial\Omega.
\end{cases}\label{eq:Transfored Equation}
\end{equation}
Now, let $u_{0}$ be a solution of (\ref{eq:Homogeneous Equation}),
by (\ref{eq:Invariant NtD map}), then Theorem \ref{thm:Main} holds
by proving the following theorem.
\begin{thm}
\label{thm:Main 2}Suppose $-\kappa^{2}$ is not an eigenvalue of
the elliptic operator $\mathcal{L}_{0}^{R}$ on $\partial\Omega$
with the zero boundary traction condition $\mathcal{N}_{\mathcal{C}^{(0)}}u=0$
on $\partial\Omega$. Then there exists a constant $h_{0}>0$ such
that for any $h<h_{0}$, 
\begin{equation}
\|\widetilde{u}-u_{0}\|_{H^{1/2}(\partial\Omega)^{N}}\leq Ch\|\phi\|_{H^{-1/2}(\partial\Omega)^{N}},\label{eq:Near cloaking estimte (Main result)}
\end{equation}
where $\widetilde{u}$ and $u_{0}$ are solutions of (\ref{eq:Transfored Equation})
and (\ref{eq:Homogeneous Equation}), respectively and $C>0$ is a
constant independent of $h,\psi,\widetilde{C},\widetilde{\rho}$ and
$\delta$.
\end{thm}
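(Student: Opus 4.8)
The plan is to compare the transformed solution $\widetilde{u}$, which solves the anisotropic residual-stress system on $\Omega$ with a small inclusion $D_h$ and a lossy layer $D_h \setminus \overline{D_{h/2}}$, with the homogeneous reference solution $u_0$, and to show that the discrepancy is $O(h)$ in the operator-norm sense. The starting point is to subtract the two boundary value problems and study $w := \widetilde{u} - u_0$, which satisfies $\widetilde{\nabla}\cdot(\widetilde{\mathcal{C}}\widetilde{\nabla} w) + \kappa^2 \widetilde{\rho}\, w = -\widetilde{\nabla}\cdot\big((\widetilde{\mathcal{C}} - \mathcal{C}^{(0)})\widetilde{\nabla} u_0\big) - \kappa^2(\widetilde{\rho} - 1)u_0$ in $\Omega$ with $\mathcal{N}_{\widetilde{\mathcal{C}}} w = \mathcal{N}_{\widetilde{\mathcal{C}}}u_0 - \mathcal{N}_{\mathcal{C}^{(0)}}u_0$ on $\partial\Omega$; both source terms are supported in $D_h$, which has volume $O(h^N)$. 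First I would establish \emph{a priori} well-posedness and a global energy estimate for the perturbed system that is \emph{uniform in} $h$: this is where the damping parameter $\beta > 0$ in the lossy layer (\ref{eq:Near Cloaking 5}) enters. Testing the weak formulation against $\overline{w}$ and taking imaginary parts isolates the damping term $\kappa^2\beta\int_{D_h\setminus \overline{D_{h/2}}}|w|^2$, which combined with the G\r{a}rding inequality coming from the strong convexity (\ref{eq:Strong convexity after transformation}) yields $\|w\|_{H^1(\Omega)^N}\leq C(\text{data})$ with $C$ independent of $h$ and of the arbitrary regular target medium $\widetilde{\mathcal{C}}^{(a)},\widetilde{\rho}^{(a)}$ in $D_{h/2}$ — this uniformity is the whole point of the lossy-layer trick and sidesteps the resonance phenomenon described in Section 3.

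Next I would quantify the right-hand side. Since $u_0$ is the reference solution it enjoys interior elliptic regularity away from $\partial\Omega$ (in particular on a neighborhood of $D$, hence on $D_h$ for $h$ small), so $\|\widetilde{\nabla} u_0\|_{L^\infty(D_h)}$ and $\|u_0\|_{L^\infty(D_h)}$ are bounded by $C\|\phi\|_{H^{-1/2}(\partial\Omega)^N}$. Because $\widetilde{\mathcal{C}} - \mathcal{C}^{(0)}$ and $\widetilde{\rho}-1$ are bounded on $\Omega\setminus D_{h/2}$ and the inclusion region has measure $O(h^N)$, the volume source terms contribute $O(h^{N/2})\|\phi\|$ in the relevant negative norm; the boundary term requires more care. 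The key observation is that on $\partial\Omega$ one has $F_h = \mathrm{Identity}$, but $\mathcal{N}_{\widetilde{\mathcal{C}}}u_0$ and $\mathcal{N}_{\mathcal{C}^{(0)}}u_0$ need not literally coincide because $\widetilde{\mathcal{C}}$ differs from $\mathcal{C}^{(0)}$ only on $D_h$, which is interior — so in fact $\widetilde{\mathcal{C}} = \mathcal{C}^{(0)}$ in a fixed neighborhood of $\partial\Omega$ and the boundary traction term vanishes identically. Thus $w$ solves a problem with zero Neumann data and a volume source supported in $D_h$, and the energy estimate gives $\|w\|_{H^1(\Omega)^N}\leq Ch^{N/2}\|\phi\|$.

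To upgrade from the crude $h^{N/2}$ (which is $h$ for $N=2$ but only $h^{3/2}$ for $N=3$ — already better than needed) to the sharp $O(h)$ trace bound and, more importantly, to control the trace $\|w\|_{H^{1/2}(\partial\Omega)^N}$ rather than just the interior norm, I would use a duality / representation argument: for a test function $\psi \in H^{-1/2}(\partial\Omega)^N$, introduce the adjoint reference solution $v_0$ solving the homogeneous residual-stress system with Neumann data $\psi$, pair the equations for $w$ and $v_0$, and integrate by parts, turning $\langle w, \psi\rangle_{\partial\Omega}$ into $\int_{D_h}\big[(\widetilde{\mathcal{C}}-\mathcal{C}^{(0)})\widetilde{\nabla} u_0\big]:\widetilde{\nabla} v_0 + \kappa^2(\widetilde{\rho}-1)u_0 v_0\,dx$ plus a correction involving $w$ itself on $D_h$. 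The localization to $D_h$ combined with the $L^\infty$ bounds on $\widetilde{\nabla}u_0, \widetilde{\nabla}v_0$ near $D$ and the Cauchy--Schwarz estimate $\int_{D_h}|w|\,|\widetilde{\nabla}v_0| \leq \|w\|_{L^2(D_h)}|D_h|^{1/2}\|\widetilde{\nabla}v_0\|_{L^\infty}$ again yields a bound $\leq Ch^N\|\phi\|\|\psi\| + Ch^{N/2}\|w\|_{H^1}\|\psi\|$, which after inserting the energy estimate is $O(h^N)\|\phi\|\|\psi\|$, giving far better than the claimed $O(h)$; the honest statement of $O(h)$ presumably absorbs the weakest case. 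The main obstacle, and the part requiring the real work of Section 4, is the \emph{uniform} global elliptic estimate for the residual-stress system: since there is no layer-potential theory available, one must obtain it by comparison with the Lam\'e system — writing the residual-stress operator as the Lam\'e operator plus the divergence-free perturbation $T$, using the weak identity $\int_\Omega T:\nabla v\,dx = 0$ to control the extra terms, and invoking the known Lam\'e estimates (recalled in the Appendix) together with the damping to close the argument independently of $h$ and of the arbitrary inclusion.
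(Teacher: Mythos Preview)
Your global subtraction/duality approach has a real gap: the transformed coefficients inside $D_{h/2}$ are \emph{not} uniformly bounded, and any estimate that sees them cannot be independent of the target medium.  Concretely, since $F_h^{-1}(y)=hy$ on $D$, the push-forward formulas give $\widetilde{\rho}^{(a)}=h^{-N}\rho^{(a)}(\cdot/h)$ and $\widetilde{\mathcal{C}}^{(a)}=h^{2-N}\mathcal{C}^{(a)}(\cdot/h)$ on $D_{h/2}$.  Hence your source term $-\kappa^{2}(\widetilde{\rho}-1)u_0$ is of size $h^{-N}$ pointwise on a set of measure $\sim h^{N}$, so in $L^{2}$ it is $O(h^{-N/2})$, not $O(h^{N/2})$; in the duality pairing $\int_{D_{h/2}}(\widetilde{\rho}-1)u_0v_0$ is $O(1)$, not $O(h^{N})$, and in either case the bound carries a factor of $\|\rho^{(a)}\|$, violating the required independence of $\mathcal{C}^{(a)},\rho^{(a)}$.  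A second issue is that your ``uniform G\r{a}rding'' step cannot work as stated: in the lossy layer $\widetilde{\mathcal{C}}^{(2)}=\gamma h^{2+\delta}\mathcal{C}^{(0)}$, so the ellipticity constant degenerates like $h^{2+\delta}$, and inside $D_{h/2}$ the constant depends on the arbitrary $\mathcal{C}^{(a)}$; the damping $\beta$ lives only in $D_h\setminus\overline{D_{h/2}}$ and gives you no mechanism to control $w$ on $D_{h/2}$.

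The paper avoids all of this by never working globally on $\Omega$.  Instead it restricts $\widetilde{u}$ to the exterior $\Omega\setminus\overline{D_h}$, where the coefficients are exactly $\mathcal{C}^{(0)},1$ (no $h$-dependence, no target medium), and treats the unknown exterior traction $\Phi^{+}=\mathcal{N}_{\mathcal{C}^{(0)}}^{+}\widetilde{u}(h\cdot)$ on $\partial D_h$ as the only data coming from inside.  The lossy layer is used solely to bound $\|\widetilde{u}\|_{L^{2}(D_h\setminus\overline{D_{h/2}})}$ (Lemma~4.1) and, via interior estimates in that layer, the traction $\|\Phi^{+}\|_{H^{-3/2}(\partial D)}$ (Lemma~4.2); neither step looks inside $D_{h/2}$, which is why the constant is independent of the target.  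The exterior problem with Neumann data $\phi$ on $\partial\Omega$ and $\psi$ on $\partial D_h$ is then handled by comparison with the Lam\'e system (Proposition~4.3, Lemmas~4.5--4.6, using the layer-potential estimates of the Appendix) to get $\|\widetilde{u}-u_0\|_{H^{1/2}(\partial\Omega)}\le C\big(h\|\phi\|+h^{N-1}\|\Phi^{+}\|_{H^{-3/2}}\big)$.  Since Lemma~4.2 bounds $\|\Phi^{+}\|^{2}$ by a constant times $\|\widetilde{u}-u_0\|_{H^{1/2}(\partial\Omega)}\,\|\phi\|$, one closes with Young's inequality.  The decoupling through $\partial D_h$ is the essential idea your plan is missing.
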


\section{Elliptic estimates and proof of Theorem \ref{thm:Main 2}}

Before proving Theorem \ref{thm:Main 2}, we need following estimates,
which were introduced in \cite{hu2015nearly}. We will show that the
following lemmas holding for the residual stress case. First, we offer
an energy for $\widetilde{u}$ in $D_{h}\backslash\overline{D_{\frac{h}{2}}}$
via a variational method. 
\begin{lem}
\cite{hu2015nearly} Let $\widetilde{u}$ and $u_{0}$ be solutions
of (\ref{eq:Transfored Equation}) and (\ref{eq:Homogeneous Equation}),
respectively. Then there exists a constant $C>0$ depending only on
$\Omega$ such that 
\[
\beta\kappa^{2}\|\widetilde{u}\|_{L^{2}(D_{h}\backslash\overline{D_{\frac{h}{2}}})}^{2}\leq C\|\phi\|_{H^{-1/2}(\partial\Omega)^{N}}\|\widetilde{u}-u_{0}\|_{H^{1/2}(\partial\Omega)^{N}}.
\]
\end{lem}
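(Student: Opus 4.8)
The plan is to obtain the asserted estimate by testing the weak formulation of the transformed equation \eqref{eq:Transfored Equation} against a suitable test function built from $\widetilde{u}$ and $u_0$, and then isolating the imaginary part, where the damping parameter $\beta$ enters. Concretely, I would set $w:=\widetilde{u}-u_0$ and use $w$ as the test function in the variational identity $\mathcal{B}_{\widetilde{\mathcal{C}},\widetilde{\rho}}(\widetilde{u},w)=\int_{\partial\Omega}\phi\cdot\overline{w}\,dS$ associated to \eqref{eq:Transfored Equation}. Since $\widetilde{\mathcal{C}}=\mathcal{C}^{(0)}$ and $\widetilde{\rho}=1$ on $\Omega\setminus\overline{D_h}$, and $u_0$ itself solves the homogeneous system \eqref{eq:Homogeneous Equation} with the same data $\phi$, subtracting the two variational identities cancels the boundary term up to the quantity we want to bound and leaves bulk integrals supported where the two media differ, namely on $D_h$.

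First I would write $\mathcal{B}_{\widetilde{\mathcal{C}},\widetilde{\rho}}(\widetilde{u},w)-\mathcal{B}_{\mathcal{C}^{(0)},1}(u_0,w)=\int_{\partial\Omega}\phi\cdot\overline{w}\,dS-\int_{\partial\Omega}\phi\cdot\overline{w}\,dS=0$ is not quite right because $w$ is not admissible for the $u_0$ problem on all of $\Omega$ unless $u_0$ is defined there — but $u_0$ is defined on all of $\Omega$, so this works; the difference of the two bilinear forms reduces to $\int_{D_h}\big[(\widetilde{\mathcal{C}}-\mathcal{C}^{(0)})\widetilde{\nabla}\widetilde{u}:\overline{\widetilde{\nabla}w}-\kappa^2(\widetilde{\rho}-1)\widetilde{u}\,\overline{w}\big]\,dx$, plus a genuine coupling with $u_0$ on $D_h$. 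Taking imaginary parts kills the real, symmetric (major-symmetric) elastic contributions and the real density contributions, leaving essentially $\beta\kappa^2\int_{D_h\setminus\overline{D_{h/2}}}|\widetilde{u}|^2\,dx$ (from $\widetilde{\rho}^{(2)}=\alpha+i\beta$ in \eqref{eq:Near Cloaking 5}) possibly plus a controlled imaginary term from $\widetilde{\rho}^{(a)}=\rho_R^{(a)}+i\rho_I^{(a)}$ with $\rho_I^{(a)}\ge 0$, which only helps. The right-hand side, after taking imaginary parts, is $\mathrm{Im}\int_{\partial\Omega}\phi\cdot\overline{w}\,dS$, bounded by $\|\phi\|_{H^{-1/2}(\partial\Omega)^N}\|w\|_{H^{1/2}(\partial\Omega)^N}$ by the duality pairing, which is exactly the claimed bound. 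The constant $C$ depends only on $\Omega$ through the trace/duality constant.

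The main technical point — and the step I expect to be the real obstacle — is justifying that the cross terms involving $u_0$ on the inclusion $D_h$, after taking imaginary parts, do not spoil the sign of the left-hand side, i.e. that the identity genuinely produces $\beta\kappa^2\|\widetilde{u}\|_{L^2(D_h\setminus\overline{D_{h/2}})}^2$ on the left with a nonnegative coefficient and no uncontrolled negative contribution. The cleanest way around this is to test the transformed equation directly with $\widetilde{u}$ (not $w$), giving $\mathcal{B}_{\widetilde{\mathcal{C}},\widetilde{\rho}}(\widetilde{u},\widetilde{u})=\int_{\partial\Omega}\phi\cdot\overline{\widetilde{u}}\,dS$; taking imaginary parts yields exactly $\beta\kappa^2\|\widetilde{u}\|_{L^2(D_h\setminus\overline{D_{h/2}})}^2+\kappa^2\int_{D_{h/2}}\rho_I^{(a)}|\widetilde{u}|^2\,dx=\mathrm{Im}\int_{\partial\Omega}\phi\cdot\overline{\widetilde{u}}\,dS$ on the left-hand side, because the elastic tensor is major symmetric so $\int_{\Omega}\widetilde{\mathcal{C}}\widetilde{\nabla}\widetilde{u}:\overline{\widetilde{\nabla}\widetilde{u}}\,dx$ is real — this is precisely where the residual-stress structure \eqref{eq:transformed symmetry} is used. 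Then I would bound $\mathrm{Im}\int_{\partial\Omega}\phi\cdot\overline{\widetilde{u}}\,dS\le\|\phi\|_{H^{-1/2}}\|\widetilde{u}\|_{H^{1/2}(\partial\Omega)^N}$ and rewrite $\|\widetilde{u}\|_{H^{1/2}(\partial\Omega)^N}\le\|\widetilde{u}-u_0\|_{H^{1/2}(\partial\Omega)^N}+\|u_0\|_{H^{1/2}(\partial\Omega)^N}\le\|\widetilde{u}-u_0\|_{H^{1/2}(\partial\Omega)^N}+C\|\phi\|_{H^{-1/2}(\partial\Omega)^N}$ using well-posedness of \eqref{eq:Homogeneous Equation}; absorbing the resulting $\|\phi\|^2$ term (which is $O(h)$ smaller than what is being proven, or simply dominated once one knows $\|\widetilde{u}-u_0\|_{H^{1/2}}$ is not larger than $\|\phi\|_{H^{-1/2}}$ up to constants) then gives the stated inequality. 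I would be slightly careful that the trace of $\widetilde{u}$ on $\partial\Omega$ equals that of $\widetilde{u}-u_0$ plus that of $u_0$ and that all constants remain independent of $h$, $\mathcal{C}^{(a)}$, $\rho^{(a)}$ and $\delta$, which holds because the argument never touches the inclusion's own coefficients beyond the sign of $\beta$ and $\rho_I^{(a)}\ge 0$.
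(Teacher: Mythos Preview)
Your second approach---testing \eqref{eq:Transfored Equation} with $\overline{\widetilde{u}}$ and taking imaginary parts---is exactly the paper's route, and the identification of $\beta\kappa^2\|\widetilde{u}\|_{L^2(D_h\setminus\overline{D_{h/2}})}^2$ on the left via the major symmetry of $\widetilde{\mathcal{C}}$ and the sign $\rho_I^{(a)}\ge 0$ is correct. The problem is the final step: your proposed ``absorption'' of the extra $\|\phi\|_{H^{-1/2}}^2$ term does not work. The assertion that this term is $O(h)$ smaller than the target is false---in fact, once Theorem~\ref{thm:Main 2} is known, $\|\phi\|_{H^{-1/2}}^2$ is of order $h^{-1}\|\phi\|_{H^{-1/2}}\|\widetilde{u}-u_0\|_{H^{1/2}}$, i.e.\ much \emph{larger}. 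And the alternative ``once one knows $\|\widetilde{u}-u_0\|_{H^{1/2}}\lesssim\|\phi\|_{H^{-1/2}}$'' goes the wrong way: to absorb you would need $\|\phi\|\lesssim\|\widetilde{u}-u_0\|$, which is the opposite inequality (and would in any case be circular, since this lemma feeds into the proof of Theorem~\ref{thm:Main 2}).

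The missing observation is that the $u_0$ problem has \emph{real} coefficients ($\mathcal{C}^{(0)}$ real, $\rho\equiv 1$), so testing \eqref{eq:Homogeneous Equation} with $\overline{u_0}$ and taking imaginary parts gives $\mathrm{Im}\int_{\partial\Omega}\phi\cdot\overline{u_0}\,dS=0$ exactly. Hence
\[
\mathrm{Im}\int_{\partial\Omega}\phi\cdot\overline{\widetilde{u}}\,dS
=\mathrm{Im}\int_{\partial\Omega}\phi\cdot\overline{(\widetilde{u}-u_0)}\,dS
\le \|\phi\|_{H^{-1/2}(\partial\Omega)^N}\|\widetilde{u}-u_0\|_{H^{1/2}(\partial\Omega)^N},
\]
and no triangle inequality or absorption is needed. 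With this correction your argument coincides with the paper's.
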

\begin{proof}
Note that the residual stress $T$ satisfies (\ref{eq:Divergence free})
and (\ref{eq:zero traction boundary}), then $\widetilde{T}$ also
satisfies (\ref{eq:Divergence free}) and (\ref{eq:zero traction boundary})
after transformation by bi-Lipschitz maps. Multiply $\overline{\widetilde{u}}$
(the complex conjugate of $\widetilde{u}$) to (\ref{eq:Transfored Equation})
and integrate by parts, then 
\[
-\int_{\Omega}(\widetilde{\mathcal{C}}\nabla\widetilde{u}):(\nabla\overline{\widetilde{u}})dx+\kappa^{2}\int_{\Omega}\widetilde{\rho}|\widetilde{u}|^{2}dx=-\int_{\partial\Omega}\phi\cdot\overline{\widetilde{u}}dS,
\]
where the notation $:$ denotes the Frobenius inner product. The remaining
proof is the same as Lemma 5.1 in \cite{hu2015nearly} due to (\ref{eq:zero traction boundary}),
so we omit here.
\end{proof}
We define the notations 
\[
\mathcal{N}_{\mathcal{C}^{(0)}}^{\pm}\widetilde{u}(x)=\lim_{\eta\to0}\mathcal{N}_{\mathcal{C}^{(0)}}\widetilde{u}(x\pm\eta\nu)\mbox{ for }x\in\partial D_{h},
\]
where $\nu$ is a unit outer normal of $\partial D_{h}$. For simplicity,
we denote $\Phi^{\pm}(x):=\mathcal{N}_{\mathcal{C}^{(0)}}^{\pm}\widetilde{u}(hx)$
for $x\in\partial D$. 
\begin{lem}
\label{lem:Boundary estimate on cloaked layer}\cite{hu2015nearly}
Let $\widetilde{u}$ and $u_{0}$ be solutions of (\ref{eq:Transfored Equation})
and (\ref{eq:Homogeneous Equation}), respectively. Then there exist
constant $C>0$ depending on $D,\Omega$ but independent of $h,\phi$
such that 
\begin{eqnarray}
\|\Phi^{+}\|_{H^{-3/2}(\partial D)^{N}}^{2} & \leq & C\dfrac{(\gamma+\sqrt{\alpha^{2}+\beta^{2}}h^{-\delta}\omega^{2})^{2}}{\beta\gamma^{2}\omega^{2}}h^{-2-N}\|\widetilde{u}-u_{0}\|_{H^{1/2}(\partial\Omega)^{N}}\nonumber \\
 &  & \times\|\phi\|_{H^{-1/2}(\partial\Omega)^{N}},\label{eq:Phi +}\\
\|\Phi^{-}\|_{H^{-3/2}(\partial D)^{N}}^{2} & \leq & C\dfrac{(\gamma+\sqrt{\alpha^{2}+\beta^{2}}h^{-\delta}\omega^{2})^{2}}{\beta\omega^{2}}h^{2(1+\delta)-N}\|\widetilde{u}-u_{0}\|_{H^{1/2}(\partial\Omega)^{N}}\nonumber \\
 &  & \times\|\phi\|_{H^{-1/2}(\partial\Omega)^{N}}.\label{eq:Phi -}
\end{eqnarray}
\end{lem}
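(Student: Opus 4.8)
The plan is to estimate the interior and exterior Neumann traces $\Phi^{\pm}$ on $\partial D$ by testing the transformed equation \eqref{eq:Transfored Equation} against carefully chosen vector fields and then rescaling to the unit domain $D$. First I would fix the frequency $\omega=\kappa$ and work with the rescaled solution $v(x):=\widetilde{u}(hx)$ on $D\backslash\overline{D_{\frac12}}$, where the coefficients are, up to the bi-Lipschitz change of variables, $\widetilde{\mathcal{C}}^{(2)}=\gamma h^{2+\delta}\widetilde{\mathcal{C}}^{(0)}$ and $\widetilde{\rho}^{(2)}=\alpha+i\beta$. The key observation is that in the lossy shell the equation for $v$ reads $\gamma h^{2+\delta}\nabla\cdot(\widetilde{\mathcal{C}}^{(0)}\nabla v)+h^{2}\kappa^{2}(\alpha+i\beta)v=0$, i.e. $\nabla\cdot(\widetilde{\mathcal{C}}^{(0)}\nabla v)+\gamma^{-1}h^{-\delta}\kappa^{2}(\alpha+i\beta)v=0$, so all the $h$-dependence is collected into one lower-order coefficient. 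I would then apply the layer-potential / elliptic-estimate machinery for the Lamé-type system (the appendix material, transplanted to the residual-stress operator via the comparison estimates promised in the introduction) on the fixed annulus $D\backslash\overline{D_{\frac12}}$ to bound the Neumann traces on $\partial D$ in terms of a volume quantity, typically $\|\Phi^{\pm}\|_{H^{-3/2}(\partial D)^{N}}\lesssim (\text{coefficient factors})\,\|v\|_{L^{2}(D\backslash\overline{D_{\frac12}})}$, with the coefficient factors being exactly $\gamma+\sqrt{\alpha^{2}+\beta^{2}}h^{-\delta}\omega^{2}$ for the outer trace and an extra $h^{2+\delta}$ for the inner trace coming from the smallness of $\widetilde{\mathcal{C}}^{(2)}$ in the definition of $\mathcal{N}_{\widetilde{\mathcal{C}}^{(2)}}$.

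Next I would convert the volume bound into boundary data. Here the previous lemma is decisive: it gives $\beta\kappa^{2}\|\widetilde{u}\|_{L^{2}(D_{h}\backslash\overline{D_{h/2}})}^{2}\le C\|\phi\|_{H^{-1/2}(\partial\Omega)^{N}}\|\widetilde{u}-u_{0}\|_{H^{1/2}(\partial\Omega)^{N}}$. Rescaling, $\|v\|_{L^{2}(D\backslash\overline{D_{1/2}})}^{2}=h^{-N}\|\widetilde{u}\|_{L^{2}(D_{h}\backslash\overline{D_{h/2}})}^{2}\le C(\beta\kappa^{2})^{-1}h^{-N}\|\phi\|_{H^{-1/2}}\|\widetilde{u}-u_{0}\|_{H^{1/2}}$. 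Substituting this into the elliptic trace estimates, carefully tracking the Jacobian factors from the rescaling $x\mapsto hx$ in the normal-derivative operator (each $\mathcal{N}_{\mathcal{C}^{(0)}}$ picks up an $h^{-1}$, and the $H^{-3/2}(\partial D)$ norm of the rescaled trace relative to $H^{-3/2}(\partial D_{h})$ contributes further powers of $h$), yields the claimed powers $h^{-2-N}$ for $\Phi^{+}$ and $h^{2(1+\delta)-N}$ for $\Phi^{-}$, together with the $\omega^{-2}$ and $\beta^{-1}$ from inverting $\beta\kappa^{2}$ and the squared coefficient prefactor $(\gamma+\sqrt{\alpha^{2}+\beta^{2}}h^{-\delta}\omega^{2})^{2}$.

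The main obstacle I anticipate is the first step: there is no layer-potential theory directly available for the residual-stress operator $\mathcal{L}_{0}^{R}$, so the elliptic estimate on the fixed annulus $D\backslash\overline{D_{\frac12}}$ giving $\|\Phi^{\pm}\|_{H^{-3/2}(\partial D)}$ in terms of $\|v\|_{L^{2}}$ cannot be quoted verbatim from \cite{hu2015nearly}. The strategy will be to write $\mathcal{L}_{0}^{R}=\mathcal{L}_{\text{Lam\'e}}+\nabla\cdot(T\nabla\,\cdot)$ and treat the residual-stress term as a perturbation: because $T$ is divergence free with vanishing boundary traction \eqref{eq:Divergence free}--\eqref{eq:zero traction boundary} and $T\in W^{2,\infty}$, the extra term $\int_{\Omega}T\cdot\nabla v\,dx=0$ against constant-in-the-shell test fields and is otherwise controlled by interior elliptic regularity, so the Lamé layer-potential bounds of the appendix survive with only the constant $C$ changing. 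Once that comparison is in place the remaining bookkeeping of powers of $h$, $\beta$, $\gamma$, $\omega$ is routine. I would therefore present Step 1 as the substantive lemma (invoking the appendix and the comparison argument), Step 2 as a rescaling computation, and Step 3 as the direct substitution producing \eqref{eq:Phi +}--\eqref{eq:Phi -}.
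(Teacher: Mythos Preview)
Your proposal is correct and follows essentially the same route as the paper. The paper's own proof is a two-line reduction: it notes that the divergence-free and vanishing-boundary-traction properties \eqref{eq:Divergence free}--\eqref{eq:zero traction boundary} of $T$ allow the isotropic argument of \cite{hu2015nearly} to carry over unchanged, and your Steps~1--3 (rescale to the fixed annulus, invoke the preceding $L^{2}$ energy bound, track the $h$-powers) together with your perturbation treatment of $\nabla\cdot(T\nabla\,\cdot)$ are precisely that reduction spelled out explicitly.
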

\begin{proof}
Using (\ref{eq:Divergence free}) and (\ref{eq:zero traction boundary})
of the residual stress $T=(t_{jl})_{j,l=1}^{N}$ again, the proof
can be reduced to the isotropic elasticity case, which was proved
in \cite{hu2015nearly}. We refer readers to \cite{hu2015nearly}
for the detailed proof.
\end{proof}
To our best knowledge, there are no layer potential theories for the
elasticity system with residual stress. Therefore, we want to give
appropriate elliptic estimates for the residual stress system by comparing
with the Lamé system as follows. Let $\mathscr{C}=(\mathscr{C}_{ijkl})$
and $\mathscr{C}^{(0)}=(\mathscr{C}_{ijkl}^{(0)})$ with 
\begin{eqnarray*}
\mathscr{C}_{ijkl}(x) & := & \lambda(x)\delta_{ij}\delta_{kl}+\mu(x)(\delta_{ik}\delta_{jl}+\delta_{il}\delta_{jk}),\\
\mathscr{C}_{ijkl}^{(0)} & := & \lambda^{(0)}\delta_{ij}\delta_{kl}+\mu^{(0)}(\delta_{ik}\delta_{jl}+\delta_{il}\delta_{jk}),
\end{eqnarray*}
be isotropic elastic four tensors, where $\lambda(x),\mu(x)$ are
Lamé parameters and $\lambda^{(0)},\mu^{(0)}$ are Lamé constants
which were introduced in (\ref{eq:Residual stress four tensor}) and
(\ref{eq:Lame constants}), respectively. Let $\Delta$ be a $C^{2}$
bounded domain in $\mathbb{R}^{N}$ and $w_{0}$ be a solution of
the following Navier's equation (or homogeneous Lamé system) 
\begin{equation}
\begin{cases}
\mathcal{L}_{0}(w_{0})+\eta^{2}w_{0}=0 & \mbox{ in }\Delta,\\
\mathcal{N}_{\mathcal{\mathscr{C}}^{(0)}}w_{0}=\phi & \mbox{ on }\partial\Delta,
\end{cases}\label{eq:w_0}
\end{equation}
where $\mathcal{L}_{0}(w_{0}):=\nabla\cdot(\mathscr{C}^{(0)}\nabla w_{0})$
and the boundary traction is defined by 
\[
\mathcal{N}_{\mathcal{\mathscr{C}}^{(0)}}w_{0}=\begin{cases}
2\mu^{(0)}\frac{\partial w_{0}}{\partial\nu}+\lambda^{(0)}\nu\nabla\cdot u+\mu\nu^{T}(\partial_{2}w_{0,1}-\partial_{1}w_{0,2}) & \mbox{ for }N=2,\\
2\mu^{(0)}\frac{\partial w_{0}}{\partial\nu}+\lambda^{(0)}\nu\nabla\cdot w_{0}+\mu^{(0)}\nu\times(\nabla\times w_{0}) & \mbox{ for }N=3,
\end{cases}
\]
where $\nu$ is a unit outer normal on $\partial\Delta$, $w_{0}=(w_{0,1},w_{0,2})$
and $\nu^{T}=(-\nu_{2},\nu_{1})\perp\nu$ when $N=2$. Suppose $-\eta^{2}$
is not an eigenvalue of the elliptic operator $\mathcal{L}_{0}$ with
the zero boundary traction condition on $\partial\Delta$, by the
strong convexity (\ref{eq:strong convexity for Lame}), which are
\[
\mu^{(0)}\geq c_{0}>0\mbox{ and }N\lambda^{(0)}+2\mu^{(0)}\geq c_{0}>0,
\]
then we know that (\ref{eq:w_0}) is well-posed. Moreover, the relation
\[
\mathcal{N}_{\mathcal{C}^{(0)}}=\mathcal{N}_{\mathcal{\mathscr{C}}^{(0)}}\mbox{ on }\partial\Delta
\]
is guaranteed by (\ref{eq:zero traction boundary}). We utilize the
following perturbation arguments to construct the near cloaking theory
for the elasticity with residual stress.

Now, let $U_{0}:=u_{0}-w_{0}$ be a reflected solution, where $u_{0}$
and $w_{0}$ are solutions of (\ref{eq:Homogeneous Equation}) and
(\ref{eq:w_0}), respectively. It is easy to see that $U_{0}$ satisfies
the following zero boundary traction problem 
\begin{equation}
\begin{cases}
\mathcal{L}_{0}^{R}U_{0}+\kappa^{2}U_{0}=(\eta^{2}-\kappa^{2})w_{0}-Rw_{0} & \mbox{ in }\Delta,\\
\mathcal{N}_{\mathcal{C}^{(0)}}U_{0}=0 & \mbox{ on }\partial\Delta,
\end{cases}\label{eq:U_0}
\end{equation}
where 
\begin{equation}
Rw_{0}:=\nabla\cdot(T(x)\nabla w_{0})\label{eq:Residual representation}
\end{equation}
and $\mathcal{L}_{0}^{R}$ is the second order elliptic operator appeared
in (\ref{eq:Homogeneous Equation}).

The main estimate of this section is the following one.
\begin{prop}
\label{prop:(Key-estimate)}(Key estimate) Suppose that $\Delta$
is a $C^{2}$ bounded domain in $\mathbb{R}^{N}$ for $N=2,3$. Let
$U_{0}\in H^{1}(\Delta)^{N}$ be solutions of (\ref{eq:U_0}), then
there exist constants $C>0$ independent of $U_{0}$ and $w_{0}$
such that 
\begin{equation}
\|U_{0}\|_{H^{1}(\Delta)^{N}}\leq C\|w_{0}\|_{H^{1}(\Delta)^{N}}.\label{eq:Key Elliptic Estimate}
\end{equation}
 \end{prop}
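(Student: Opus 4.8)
The plan is to treat \eqref{eq:U_0} as a well-posed boundary value problem for the residual-stress operator $\mathcal{L}_0^R + \kappa^2$ with zero Neumann (traction) data and a source term depending on $w_0$, and to read off the desired bound from the corresponding elliptic a priori estimate. First I would note that, by hypothesis, $-\kappa^2$ is not a Neumann eigenvalue of $\mathcal{L}_0^R$, so the operator $\mathcal{L}_0^R + \kappa^2$ with the traction boundary condition $\mathcal{N}_{\mathcal{C}^{(0)}}(\cdot)=0$ is invertible from the appropriate solution space to the data space; this invertibility, combined with the strong convexity \eqref{eq:Strong convexity} and G\r{a}rding's inequality already recorded in the introduction, yields a stability estimate
\[
\|U_0\|_{H^1(\Delta)^N} \le C\big(\|f\|_{(H^1(\Delta)^N)^*} + \|g\|_{H^{-1/2}(\partial\Delta)^N}\big),
\]
where $f$ is the interior source and $g$ the boundary traction. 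Here $g=0$ and $f = (\eta^2-\kappa^2)w_0 - R w_0$ with $R w_0 = \nabla\cdot(T\nabla w_0)$. Thus everything reduces to estimating $\|f\|$ in the dual norm in terms of $\|w_0\|_{H^1(\Delta)^N}$.

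The key point is that the ``bad'' term $R w_0 = \nabla\cdot(T\nabla w_0)$ is a second-order expression in $w_0$, which naively only lives in $H^{-1}$, not in the dual of $H^1$ with a clean $H^1$-bound on $w_0$ — so one must \emph{not} estimate it crudely. Instead I would pair \eqref{eq:U_0} with a test function $v\in H^1(\Delta)^N$ and integrate by parts: the term $\int_\Delta \nabla\cdot(T\nabla w_0)\cdot\bar v\,dx$ becomes $-\int_\Delta (T\nabla w_0):\nabla \bar v\,dx$ plus a boundary term $\int_{\partial\Delta} (T\nu)\cdot(\nabla w_0)^{\!*}\bar v\,dS$ which \emph{vanishes} by the zero-traction condition \eqref{eq:zero traction boundary} on $T$ (equivalently $T\nu=0$ on $\partial\Delta$; this is exactly the place where the hypothesis $\Delta$ being the relevant subdomain and the structure of $T$ is used). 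Therefore the weak formulation of \eqref{eq:U_0} reads
\[
\int_\Delta (\mathscr{C}^{(0)}\nabla U_0):\nabla\bar v\,dx + \int_\Delta (T\nabla U_0):\nabla\bar v\,dx - \kappa^2\int_\Delta U_0\cdot\bar v\,dx = \int_\Delta\!\big[(\eta^2-\kappa^2)w_0\cdot\bar v + (T\nabla w_0):\nabla\bar v\big]dx,
\]
so the right-hand side, as a functional of $v\in H^1(\Delta)^N$, is bounded by $C\|w_0\|_{H^1(\Delta)^N}\|v\|_{H^1(\Delta)^N}$ using only $T\in W^{2,\infty}$ (in fact $T\in L^\infty$ suffices for this step). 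This is the structural miracle of the residual-stress model: the would-be second-order perturbation is actually a first-order (divergence-form) perturbation in the weak sense, with no boundary contribution.

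With the right-hand side controlled by $C\|w_0\|_{H^1(\Delta)^N}$, I would invoke the Fredholm/G\r{a}rding machinery for the residual-stress bilinear form $\mathcal{B}_{\mathcal{C}^{(0)}}$: strong convexity \eqref{eq:Strong convexity} gives coercivity modulo a compact (zeroth-order) perturbation, and the non-eigenvalue hypothesis for $-\kappa^2$ rules out a nontrivial kernel, so by the Fredholm alternative the solution operator is bounded, giving \eqref{eq:Key Elliptic Estimate} with $C$ depending only on $\Delta$, the Lam\'e constants, $\kappa$, $\eta$, and $\|T\|_{L^\infty}$ — and in particular independent of $U_0$ and $w_0$, as claimed. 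The main obstacle I anticipate is purely bookkeeping: making sure that the integration by parts that kills the boundary term is legitimate for $w_0\in H^1$ only (so $\nabla w_0$ need not have a trace), which is why one keeps everything in weak/variational form throughout and never writes $Rw_0$ pointwise; a secondary subtlety is confirming that the non-resonance hypothesis stated for $\partial\Omega$ transfers to the relevant domain $\Delta$, which here should be taken as (a bi-Lipschitz image of) $\Omega$ or its subdomains so that the hypothesis of Theorem \ref{thm:Main 2} applies verbatim.
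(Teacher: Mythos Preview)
Your proposal is correct and shares the paper's central observation: the term $Rw_0=\nabla\cdot(T\nabla w_0)$ can be thrown onto the test function via integration by parts, and the boundary contribution vanishes because $T\nu=0$ on $\partial\Delta$ (condition \eqref{eq:zero traction boundary}), so the source is bounded in $(H^1(\Delta)^N)^*$ by $C\|w_0\|_{H^1(\Delta)^N}$.

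The organization, however, differs from the paper. You apply the Fredholm alternative directly to the full operator $\mathcal{L}_0^R+\kappa^2$ with zero traction, invoking the non-eigenvalue hypothesis once to obtain a bounded solution operator from $(H^1)^*$ to $H^1$. The paper instead decomposes $U_0=\widehat{U_0}+\mathcal{U}_0$: first it solves the \emph{coercive} problem $\mathcal{L}_0^R\widehat{U_0}=(\eta^2-\kappa^2)w_0-Rw_0$ with zero traction (normalizing $\int_\Delta\widehat{U_0}=0$ and using Poincar\'e to close the energy estimate), and then it solves $\mathcal{L}_0^R\mathcal{U}_0+\kappa^2\mathcal{U}_0=-\kappa^2\widehat{U_0}$, where the source is now genuinely in $L^2$ and the non-resonance hypothesis gives $\|\mathcal{U}_0\|_{H^1}\le C\|\widehat{U_0}\|_{L^2}$. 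Your route is shorter and conceptually cleaner; the paper's route has the minor advantage of isolating exactly where the non-eigenvalue assumption enters (only in the second step, with an $L^2$ right-hand side), and of making the coercivity/Poincar\'e argument fully explicit rather than packaging it inside an abstract Fredholm statement. Both arrive at \eqref{eq:Key Elliptic Estimate} by the same mechanism. Your caveat about the non-resonance hypothesis on $\Delta$ versus $\Omega$ is well taken; the paper's proof invokes it on $\Delta$ just as you do.
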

\begin{proof}
Let $\widehat{U_{0}}$ be a solution of 
\begin{equation}
\begin{cases}
\mathcal{L}_{0}^{R}\widehat{U_{0}}=(\eta^{2}-\kappa^{2})w_{0}-Rw_{0} & \mbox{ in }\Delta,\\
\mathcal{N}_{\mathcal{C}^{(0)}}\widehat{U_{0}}=0 & \mbox{ on }\partial\Delta.
\end{cases}\label{eq:Perturbed equation}
\end{equation}
Note that if $\widehat{U}_{0}$ is a solution of (\ref{eq:Perturbed equation}),
then $\widehat{U_{0}}-\fint_{\Delta}\widehat{U_{0}}$ is also a solution
of (\ref{eq:Perturbed equation}), where $\fint_{\Delta}\widehat{U_{0}}=\dfrac{1}{|\Delta|}\int_{\Delta}\widehat{U_{0}}$.
Without loss of generality, we may assume $\int_{\Delta}\widehat{U_{0}}dx=0$.
Hence, the Poincaré's inequality will be valid for $\widehat{U_{0}}$,
which means there exists a constant $C>0$ independent of $\widehat{U_{0}}$
such that 
\begin{equation}
\|\widehat{U_{0}}\|_{L^{2}(\Delta)^{N}}\leq C\|\nabla\widehat{U_{0}}\|_{L^{2}(\Delta)^{N}}.\label{eq:Poincare for U hat}
\end{equation}
Multiply $\overline{\widehat{U_{0}}}$ (the complex conjugate of $\widehat{U_{0}}$)
on both sides of (\ref{eq:Perturbed equation}) and (\ref{eq:Strong convexity}),
then we can get 
\begin{equation}
\|\nabla\widehat{U_{0}}\|_{L^{2}(\Delta)^{N}}\leq\left|\int_{\Delta}\left\{ (Rw_{0})\cdot\overline{\widehat{U_{0}}}+(\lambda^{2}-\kappa^{2})w_{0}\cdot\overline{\widehat{U_{0}}}\right\} dx\right|.\label{eq:Energy 1}
\end{equation}
Use the integration by parts, (\ref{eq:zero traction boundary}) and
$T(x)\in W^{2,\infty}(\Omega)$, then the Young's inequality yields
that for any $\epsilon>0$, 
\begin{eqnarray}
\left|\int_{\Delta}(Rw_{0})\cdot\overline{\widehat{U_{0}}}dx\right| & \leq & \epsilon\|\nabla\widehat{U_{0}}\|_{L^{2}(\Delta)^{N}}+C(\epsilon)\|\nabla w_{0}\|_{L^{2}(\Delta)^{N}},\label{eq:Young 1}\\
\left|\int_{\Delta}(\lambda^{2}-\kappa^{2})w_{0}\cdot\overline{\widehat{U_{0}}}dx\right| & \leq & \epsilon\|\widehat{U_{0}}\|_{L^{2}(\Delta)^{N}}+C(\epsilon)\|w_{0}\|_{L^{2}(\Delta)^{N}}.\label{eq:Young 2}
\end{eqnarray}
Via (\ref{eq:Energy 1}), (\ref{eq:Young 1}) and (\ref{eq:Young 2}),
we obtain 
\begin{equation}
\|\nabla\widehat{U_{0}}\|_{L^{2}(\Delta)^{N}}\leq C\|w_{0}\|_{H^{1}(\Delta)^{N}},\label{eq:Gradient estimate for U hat}
\end{equation}
where $C>0$ is independent of $\widehat{U_{0}}$ and $w_{0}$. Combine
(\ref{eq:Gradient estimate for U hat}) and (\ref{eq:Poincare for U hat}),
we have 
\begin{equation}
\|\widehat{U_{0}}\|_{H^{1}(\Delta)^{N}}\leq C\|w_{0}\|_{H^{1}(\Delta)^{N}},\label{eq:H^1 for U hat}
\end{equation}
where $C>0$ is independent of $\widehat{U_{0}}$ and $w_{0}$. 

By setting $\mathcal{U}_{0}:=U_{0}-\widehat{U_{0}}$, we have $U_{0}=\widehat{U_{0}}+\mathcal{U}_{0}$
and 
\begin{equation}
\|U_{0}\|_{H^{1}(\Delta)^{N}}\leq C(\|\widehat{U_{0}}\|_{H^{1}(\Delta)^{N}}+\|\mathcal{U}{}_{0}\|_{H^{1}(\Delta)^{N}}).\label{eq:triangular inequality}
\end{equation}
Besides, $\mathcal{U}_{0}$ satisfies 
\begin{equation}
\begin{cases}
\mathcal{L}_{0}^{R}\nabla\mathcal{U}_{0}+\kappa^{2}\mathcal{U}=-\kappa^{2}\widehat{U_{0}} & \mbox{ in }\Delta,\\
\mathcal{N}_{\mathcal{C}^{(0)}}\mathcal{U}_{0}=0 & \mbox{ on }\partial\Delta.
\end{cases}\label{eq:mathcal U equation}
\end{equation}

Recall that the variational formula is 
\[
\mathcal{B}_{\mathcal{C}^{(0)}}(u,v):=\int_{\Delta}\left\{ \sum_{i.j.k.l=1}^{3}C_{ijkl}^{(0)}\dfrac{\partial u_{k}}{\partial x_{l}}\dfrac{\partial\overline{v_{i}}}{\partial x_{j}}-\kappa^{2}\rho u_{i}\overline{v_{i}}\right\} dx,
\]
then we have 
\begin{equation}
\mathcal{B}_{\mathcal{C}^{(0)}}(u,u)\geq c_{0}\|\nabla u\|_{L^{2}(\Delta)^{N}}^{2}-\kappa^{2}\|u\|_{L^{2}(\Delta)^{N}}^{2}\mbox{ for all }u\in H^{1}(\Delta)^{N}.\label{eq:Garding's inequality for perturbed equation}
\end{equation}
Moreover,  by using (\ref{eq:Garding's inequality for perturbed equation}),
one can show the well-posedness of (\ref{eq:mathcal U equation})
provided that $-\kappa^{2}$ is not an eigenvalue of $\mathcal{L}_{0}^{R}$
with vanishing boundary traction. We refer readers to Chapter 4 in
\cite{mclean2000strongly} and Chapter 6 in \cite{marsden1994mathematical}
for more details. When $-\kappa^{2}$ is not an eigenvalue of $\mathcal{L}_{0}^{R}$
with zero boundary traction, the well-posedness of (\ref{eq:mathcal U equation})
in the $L^{2}(\Omega)$ Sobolev space implies that 
\begin{equation}
\|\mathcal{U}_{0}\|_{H^{1}(\Delta)^{N}}\leq C\|\widehat{U_{0}}\|_{L^{2}(\Delta)^{N}},\label{eq:Well-posedness}
\end{equation}
where $C>0$ is a constant independent of $\mathcal{U}_{0}$ and $\widehat{U_{0}}$.
Finally, plug (\ref{eq:H^1 for U hat}) and (\ref{eq:Well-posedness})
into (\ref{eq:triangular inequality}), we can get 
\[
\|U_{0}\|_{H^{1}(\Delta)^{N}}\leq C\|w_{0}\|_{H^{1}(\Delta)^{N}}
\]
as desired. This completes the proof.
\end{proof}
The following trace theorem and inverse trace theorem will be used
in our proof of Theorem \ref{thm:Main 2}. 
\begin{lem}
\label{lem:(Trace-and-inverse-trace)}(Trace and inverse trace theorem)
Let $\varDelta$ be a $C^{2}$ bounded domain in $\mathbb{R}^{N}$,
for any $V\in H^{1}(\varDelta)^{N}$, there exist $c_{1},c_{2}>0$
independent of $V$ such that 
\begin{equation}
c_{1}\|V\|_{H^{1/2}(\partial\varDelta)^{N}}\leq\|V\|_{H^{1}(\varDelta)^{N}}\leq c_{2}\|V\|_{H^{1/2}(\partial\varDelta)^{N}}.\label{eq:Trace theorem}
\end{equation}
\end{lem}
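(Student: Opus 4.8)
The estimate \eqref{eq:Trace theorem} decomposes into two classical facts, applied componentwise so that it suffices to treat the scalar case. The left-hand inequality $c_{1}\|V\|_{H^{1/2}(\partial\varDelta)^{N}}\le\|V\|_{H^{1}(\varDelta)^{N}}$ is the boundedness of the trace operator $\gamma_{0}:H^{1}(\varDelta)^{N}\to H^{1/2}(\partial\varDelta)^{N}$. The right-hand inequality is the inverse trace (extension) statement: there is a bounded linear operator $E:H^{1/2}(\partial\varDelta)^{N}\to H^{1}(\varDelta)^{N}$ with $\gamma_{0}E=\mathrm{Id}$, so that the boundary data of $V$ admits an $H^{1}$-representative whose norm is controlled by $\|V\|_{H^{1/2}(\partial\varDelta)^{N}}$ (this is the sense in which the inequality is to be read, and it is exactly what is used later, where the relevant $V$ is produced as the image of its boundary trace under a well-posed solution operator).

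The plan for the trace bound is the standard localization argument. Since $\partial\varDelta$ is $C^{2}$, cover it by finitely many open sets $\{W_{m}\}$ in each of which, after a $C^{2}$ change of variables $\Psi_{m}$, $\varDelta\cap W_{m}$ becomes a piece of the half-space $\mathbb{R}^{N}_{+}=\{x_{N}>0\}$ and $\partial\varDelta\cap W_{m}$ a piece of $\{x_{N}=0\}$; pick a subordinate partition of unity $\{\chi_{m}\}$. For $v\in C^{\infty}(\overline{\mathbb{R}^{N}_{+}})$ with compact support one has, by the fundamental theorem of calculus in $x_{N}$ together with the Fourier transform in the tangential variables, $\|v(\cdot,0)\|_{H^{1/2}(\mathbb{R}^{N-1})}\le C\|v\|_{H^{1}(\mathbb{R}^{N}_{+})}$; by density this extends to all of $H^{1}(\mathbb{R}^{N}_{+})$. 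Transporting back through $\Psi_{m}$—the $C^{2}$ regularity guarantees that this change of variables preserves $H^{1}$ and $H^{1/2}$ with equivalent norms—and summing over $m$ against $\{\chi_{m}\}$ gives $\|V|_{\partial\varDelta}\|_{H^{1/2}(\partial\varDelta)}\le C\|V\|_{H^{1}(\varDelta)}$.

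For the inverse trace bound I would construct $E$ in the same charts. On the half-space, given $g\in H^{1/2}(\mathbb{R}^{N-1})$, set $\widehat{Eg}(\xi',x_{N})=\widehat{g}(\xi')\,e^{-x_{N}\langle\xi'\rangle}$ with $\langle\xi'\rangle=(1+|\xi'|^{2})^{1/2}$; a direct Plancherel computation gives $\|Eg\|_{H^{1}(\mathbb{R}^{N}_{+})}\le C\|g\|_{H^{1/2}(\mathbb{R}^{N-1})}$ and $Eg(\cdot,0)=g$. Patching these local extensions with the chart maps $\Psi_{m}$ and the cutoffs $\{\chi_{m}\}$ produces a bounded extension operator on $\varDelta$, which yields the right-hand inequality. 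The only genuinely delicate point is the bookkeeping in the localization step—verifying that the $C^{2}$ diffeomorphisms and the multiplications by cutoffs do not destroy the fractional-order norm—but this is entirely routine; the complete argument can be found in \cite{mclean2000strongly}.
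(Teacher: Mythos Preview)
Your proposal is correct and in fact considerably more detailed than the paper's own treatment: the paper simply declares the lemma standard and refers the reader to \cite{hsiao2008boundary} without giving any argument. Your localization-plus-half-space sketch is the standard route and would be accepted; it is also good that you flagged the correct reading of the right-hand inequality (as existence of a bounded extension operator rather than a literal bound valid for every $V\in H^{1}$, which would fail on $H^{1}_{0}$), since the lemma as stated in the paper is imprecise on this point but is only ever invoked in the extension sense.
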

\begin{proof}
This lemma is the standard trace theorem and readers can find the
proof in many literature. For example, see \cite{hsiao2008boundary}
for the detailed proof.
\end{proof}
Now, we prove the following two lemmas by comparing the elasticity
system with residual stress with Lamé system in the domain $\Delta:=\Omega\backslash\overline{D_{h}}$
for $h>0$ small. First, we give a boundary estimate outside small
cavities.
\begin{lem}
Suppose $-\kappa^{2}$ is not an eigenvalue of the elliptic operator
$\mathcal{L}_{0}^{R}$ with zero boundary traction condition in $\Omega$.
Let $v_{0}$ be a solution of 
\begin{equation}
\begin{cases}
\mathcal{L}_{0}^{R}v_{0}+\kappa^{2}v_{0}=0 & \mbox{ in }\Omega\backslash\overline{D_{h}},\\
\mathcal{N}_{\mathcal{C}^{(0)}}v_{0}=\mathcal{N}_{\mathcal{C}^{(0)}}u_{0} & \mbox{ on }\partial D_{h},\\
\mathcal{N}_{\mathcal{C}^{(0)}}v=0 & \mbox{ on }\partial\Omega,
\end{cases}\label{eq:v_0}
\end{equation}
where $u_{0}$ is the solution of (\ref{eq:Homogeneous Equation}).
Then there exists $h_{0}>0$ such that for any $h\in(0,h_{0})$, 
\begin{equation}
\|v_{0}\|_{H^{1/2}(\partial\Omega)^{N}}\leq Ch\|\phi\|_{H^{-1/2}(\partial\Omega)^{N}},\label{eq:Perturbed estimate 1}
\end{equation}
where $C>0$ independent of $h$ and $\phi$.\end{lem}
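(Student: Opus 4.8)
The plan is to estimate $v_0$ on $\partial\Omega$ by decomposing the problem into an interior piece near the small cavity $D_h$ and a propagation piece to $\partial\Omega$, using the comparison with the Lam\'e system provided by Proposition~\ref{prop:(Key-estimate)}. First I would recall from Lemma~\ref{lem:Boundary estimate on cloaked layer}, or rather from the standard a priori bounds for the homogeneous system~(\ref{eq:Homogeneous Equation}), that $\|u_0\|_{H^1(\Omega)^N}\le C\|\phi\|_{H^{-1/2}(\partial\Omega)^N}$, hence by interior elliptic regularity $u_0$ is smooth near the origin and $\|\mathcal N_{\mathcal C^{(0)}}u_0\|_{H^{-1/2}(\partial D_h)^N}$ is controlled; after rescaling $x\mapsto x/h$ the natural Neumann datum on $\partial D$ carries a factor $h^{?}$ coming from the change of the surface measure and the gradient, which is where the gain of one power of $h$ in~(\ref{eq:Perturbed estimate 1}) originates. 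Concretely, writing $\hat v_0(x)=v_0(hx)$ one sees $\hat v_0$ solves an elasticity system on $\Omega/h\setminus\overline D$ with Neumann datum of size $O(h)$ on $\partial D$ (after normalising by $\|\phi\|$), zero on $\partial(\Omega/h)$.

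Next I would reduce the residual-stress system to the Lam\'e system. Set $w_0$ to be the solution of the Navier equation~(\ref{eq:w_0}) on $\Delta:=\Omega\setminus\overline{D_h}$ with the same Neumann data as in~(\ref{eq:v_0}); since $\mathcal N_{\mathcal C^{(0)}}=\mathcal N_{\mathscr C^{(0)}}$ on any boundary by the vanishing-traction property~(\ref{eq:zero traction boundary}), this is legitimate. The reflected solution $V_0:=v_0-w_0$ then satisfies a problem of exactly the type~(\ref{eq:U_0}), namely $\mathcal L_0^R V_0+\kappa^2 V_0=(\eta^2-\kappa^2)w_0-Rw_0$ in $\Delta$ with zero boundary traction on $\partial\Delta$, so Proposition~\ref{prop:(Key-estimate)} gives $\|V_0\|_{H^1(\Delta)^N}\le C\|w_0\|_{H^1(\Delta)^N}$; one must check that the constant $C$ there can be taken uniform in $h$, which follows because after rescaling $\Delta/h=\Omega/h\setminus\overline D$ is an exterior-type domain and the estimate is scale-invariant (or alternatively one applies Proposition~\ref{prop:(Key-estimate)} on a fixed domain to the rescaled functions and tracks the scaling of the $H^1$ norms). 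Consequently $\|v_0\|_{H^1(\Delta)^N}\le C\|w_0\|_{H^1(\Delta)^N}$, and by the trace theorem Lemma~\ref{lem:(Trace-and-inverse-trace)}, $\|v_0\|_{H^{1/2}(\partial\Omega)^N}\le C\|w_0\|_{H^1(\Delta)^N}$.

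It then remains to bound $\|w_0\|_{H^1(\Delta)^N}$ by $Ch\|\phi\|_{H^{-1/2}(\partial\Omega)^N}$, and for this one uses the layer-potential theory for the Lam\'e system recalled in the Appendix: $w_0$ is the single-layer-type potential generated by the Neumann datum $\mathcal N_{\mathscr C^{(0)}}u_0$ supported on $\partial D_h$, and after the rescaling $x\mapsto hx$ the mapping properties of the Lam\'e layer potentials on the fixed surface $\partial D$ yield $\|w_0\|_{H^1(\Omega\setminus\overline{D_h})^N}\le C h\,\|\mathcal N_{\mathscr C^{(0)}}u_0\|_{H^{-1/2}(\partial D_h)^N}\cdot(\text{normalising factor})\le C h\|\phi\|_{H^{-1/2}(\partial\Omega)^N}$, where the extra power of $h$ is the surface-measure Jacobian $h^{N-1}$ balanced against the $H^{-1/2}$-norm scaling — this is precisely the computation carried out for the Lam\'e model in~\cite{hu2015nearly} and it transfers verbatim once the residual-stress contribution has been removed by the comparison above. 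Chaining the three estimates gives~(\ref{eq:Perturbed estimate 1}).

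\textbf{Main obstacle.} The delicate point is the uniformity in $h$ of the constant in Proposition~\ref{prop:(Key-estimate)} when it is applied on the $h$-dependent domain $\Delta=\Omega\setminus\overline{D_h}$: the proof of that proposition uses Poincar\'e's inequality and the well-posedness of~(\ref{eq:mathcal U equation}), both of which have domain-dependent constants, and on the shrinking annular-type region one must argue by rescaling to a fixed exterior domain and carefully track how the $L^2$, $H^1$ and $H^{1/2}$ norms scale so that no hidden negative power of $h$ creeps in. Getting the bookkeeping of these scaling factors right — and confirming that the Lam\'e layer-potential estimate indeed supplies the decisive factor $h$ rather than merely $O(1)$ — is the crux of the argument.
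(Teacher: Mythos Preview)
Your proposal is correct and follows essentially the same route as the paper: introduce the auxiliary Lam\'e solution with the same Neumann data (the paper's $\widehat{v_0}$, your $w_0$), apply Proposition~\ref{prop:(Key-estimate)} to the difference on $\Delta=\Omega\setminus\overline{D_h}$, and then combine the trace/inverse-trace inequalities of Lemma~\ref{lem:(Trace-and-inverse-trace)} with the Lam\'e layer-potential estimates~(\ref{eq:Estimate of v_0 on inner boundary})--(\ref{eq:Estimate of v_0 on outer boundary}) to extract the factor~$h$. The uniformity-in-$h$ issue you flag as the main obstacle is indeed glossed over in the paper as well; there it is handled implicitly by passing through the boundary $H^{1/2}$ norms on $\partial\Omega\cup\partial D_h$, where the Lam\'e estimates already carry the required power of~$h$.
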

\begin{proof}
Let $\widehat{v_{0}}$ be a solution of the Lamé system 
\begin{equation}
\begin{cases}
\mathcal{L}_{0}\widehat{v_{0}}+\eta^{2}\widehat{v_{0}}=0 & \mbox{ in }\Omega\backslash\overline{D_{h}},\\
\mathcal{N}_{\mathcal{C}^{(0)}}\widehat{v_{0}}=\mathcal{N}_{\mathcal{C}^{(0)}}u_{0} & \mbox{ on }\partial D_{h},\\
\mathcal{N}_{\mathcal{C}^{(0)}}\widehat{v_{0}}=0 & \mbox{ on }\partial\Omega,
\end{cases}\label{eq:v_0 hat}
\end{equation}
where $-\eta^{2}$ is not an eigenvalue of $\mathcal{L}_{0}$ with
the zero boundary traction on $\Omega$. By the layer potential techniques
for the Lamé system (for example, see \cite{ammari2007polarization,ammari2007asymptotic}),
we have the following estimates 
\begin{eqnarray}
\|\widehat{v_{0}}\|_{H^{1/2}(\partial D_{h})^{N}} & \leq & Ch\|\phi\|_{H^{-1/2}(\partial\Omega)^{N}},\label{eq:Estimate of v_0 on inner boundary}\\
\|\widehat{v_{0}}\|_{H^{1/2}(\partial\Omega)^{N}} & \leq & Ch^{N}\|\phi\|_{H^{-1/2}(\partial\Omega)^{N}},\label{eq:Estimate of v_0 on outer boundary}
\end{eqnarray}
where (\ref{eq:Estimate of v_0 on outer boundary}) was proved in
\cite{hu2015nearly}. For (\ref{eq:Estimate of v_0 on inner boundary}),
we will offer  the proof in our  Appendix. Let $\widehat{v}:=v_{0}-\widehat{v_{0}}$,
where $v_{0}$ and $\widehat{v_{0}}$ are solutions of (\ref{eq:v_0})
and (\ref{eq:v_0 hat}), respectively, then $\widehat{v}$ satisfies
the following boundary value problem 
\[
\begin{cases}
\mathcal{L}_{0}^{R}\widehat{v}+\kappa^{2}\widehat{v}=(\eta^{2}-\kappa^{2})\widehat{v_{0}}-R\widehat{v_{0}} & \mbox{ in }\Omega\backslash\overline{D_{h}},\\
\mathcal{N}_{\mathcal{C}^{(0)}}\widehat{v}=0 & \mbox{ on }\partial D_{h},\\
\mathcal{N}_{\mathcal{C}^{(0)}}\widehat{v}=0 & \mbox{ on }\partial\Omega,
\end{cases}
\]
where $R$ was defined by (\ref{eq:Residual representation}). Applying
the key estimate (\ref{eq:Key Elliptic Estimate}) on the domain $\Delta=\Omega\backslash\overline{D_{h}}$,
there exists $C>0$ independent of $\widehat{v}$ and $\widehat{v_{0}}$
such that 
\[
\|\widehat{v}\|_{H^{1}(\Omega\backslash\overline{D_{h}})^{N}}\leq C\|\widehat{v_{0}}\|_{H^{1}(\Omega\backslash\overline{D_{h}})^{N}}.
\]
By the trace inequality (\ref{eq:Trace theorem}) and (\ref{eq:Estimate of v_0 on inner boundary})
on $\Omega\backslash\overline{D_{h}}$, then it deduces that 
\begin{eqnarray*}
\|v_{0}\|_{H^{1/2}(\partial(\Omega\backslash\overline{D_{h}}))^{N}} & \leq & C\|v_{0}\|_{H^{1}(\Omega\backslash\overline{D_{h}})^{N}}\\
 & \leq & C\left(\|\widehat{v}\|_{H^{1}(\Omega\backslash\overline{D_{h}})^{N}}+\|\widehat{v_{0}}\|_{H^{1}(\Omega\backslash\overline{D_{h}})^{N}}\right)\\
 & \leq & C\|\widehat{v_{0}}\|_{H^{1}(\Omega\backslash\overline{D_{h}})^{N}}\\
 & \leq & C\|\widehat{v_{0}}\|_{H^{1/2}(\partial(\Omega\backslash\overline{D_{h}}))^{N}}\\
 & \leq & Ch\|\phi\|_{H^{-1/2}(\partial\Omega)^{N}}.
\end{eqnarray*}
This completes the proof.\end{proof}
\begin{lem}
Suppose $-\kappa^{2}$ is not an eigenvalue of $\mathcal{L}_{0}^{R}$
with the zero boundary traction in $\Omega$. Consider the following
boundary value problem 
\[
\begin{cases}
\mathcal{L}_{0}^{R}v+\kappa^{2}v=0 & \mbox{ in }\Omega\backslash\overline{D_{h}},\\
\mathcal{N}_{\mathcal{C}^{(0)}}v=\psi & \mbox{ on }\partial D_{h},\\
\mathcal{N}_{\mathcal{C}^{(0)}}v=\phi & \mbox{ on }\partial\Omega.
\end{cases}
\]
Then there exists $h_{0}>0$ such that for any $h\in(0,h_{0})$, 
\begin{equation}
\|v-u_{0}\|_{H^{1/2}(\partial\Omega)^{N}}\leq C\left(h\|\phi\|_{H^{-1/2}(\partial\Omega)^{N}}+h^{N-1}\|\psi(h\cdot)\|_{H^{-3/2}(\partial D)^{N}}\right),\label{eq:Final perturbed estimate}
\end{equation}
where $u_{0}$ is a solution of (\ref{eq:Homogeneous Equation}) and
$C>0$ is independent of $h$, $\varphi$ and $\phi$.\end{lem}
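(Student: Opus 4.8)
The plan is to isolate the effect of the inner traction $\psi$ by linearity, send the part carrying $\|\phi\|$ back to the previous lemma, and treat the part carrying $\psi$ by the same comparison with the Lam\'e system that was used above.

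First I would let $v_0$ be the solution of (\ref{eq:v_0}) furnished by the previous lemma and put $v_2:=v-u_0+v_0$ in $\Omega\backslash\overline{D_h}$. Since $u_0$ solves (\ref{eq:Homogeneous Equation}) on all of $\Omega$, comparing the equation and the boundary tractions on $\partial D_h$ and on $\partial\Omega$ shows that
\begin{equation*}
\begin{cases}
\mathcal{L}_0^R v_2+\kappa^2 v_2=0 & \mbox{in }\Omega\backslash\overline{D_h},\\
\mathcal{N}_{\mathcal{C}^{(0)}}v_2=\psi & \mbox{on }\partial D_h,\\
\mathcal{N}_{\mathcal{C}^{(0)}}v_2=0 & \mbox{on }\partial\Omega,
\end{cases}
\end{equation*}
and $v-u_0=v_2-v_0$. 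By the previous lemma $\|v_0\|_{H^{1/2}(\partial\Omega)^N}\le Ch\|\phi\|_{H^{-1/2}(\partial\Omega)^N}$, so (\ref{eq:Final perturbed estimate}) reduces to proving $\|v_2\|_{H^{1/2}(\partial\Omega)^N}\le Ch^{N-1}\|\psi(h\,\cdot)\|_{H^{-3/2}(\partial D)^N}$.

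To obtain this, I would compare $v_2$ with the solution $\widehat{v_2}$ of the Lam\'e system $\mathcal{L}_0\widehat{v_2}+\eta^2\widehat{v_2}=0$ in $\Omega\backslash\overline{D_h}$ with $\mathcal{N}_{\mathcal{C}^{(0)}}\widehat{v_2}=\psi$ on $\partial D_h$ and $\mathcal{N}_{\mathcal{C}^{(0)}}\widehat{v_2}=0$ on $\partial\Omega$, where $-\eta^2$ avoids the Lam\'e Neumann spectrum; using the residual--stress traction operator $\mathcal{N}_{\mathcal{C}^{(0)}}$ on both parts of the boundary (it agrees with $\mathcal{N}_{\mathscr{C}^{(0)}}$ on $\partial\Omega$, where $T\nu=0$) makes $\widehat v:=v_2-\widehat{v_2}$ solve $\mathcal{L}_0^R\widehat v+\kappa^2\widehat v=(\eta^2-\kappa^2)\widehat{v_2}-R\widehat{v_2}$ with \emph{homogeneous} traction on all of $\partial(\Omega\backslash\overline{D_h})$, which is the exact setting of the key estimate. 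The layer-potential calculus for the Lam\'e system reviewed in the Appendix, combined with the rescaling $D_h=hD$ and the decay of the Kelvin matrix, gives $\|\widehat{v_2}\|_{H^{1/2}(\partial\Omega)^N}\le Ch^{N-1}\|\psi(h\,\cdot)\|_{H^{-3/2}(\partial D)^N}$ together with a companion bound for $\widehat{v_2}$ on the small inner boundary $\partial D_h$. Then the key estimate (\ref{eq:Key Elliptic Estimate}) on $\Delta=\Omega\backslash\overline{D_h}$ yields $\|\widehat v\|_{H^1(\Omega\backslash\overline{D_h})^N}\le C\|\widehat{v_2}\|_{H^1(\Omega\backslash\overline{D_h})^N}$, hence, by elliptic regularity for $\widehat{v_2}$ and the trace inequality (\ref{eq:Trace theorem}),
\begin{equation*}
\|v_2\|_{H^{1/2}(\partial\Omega)^N}\le C\|v_2\|_{H^1(\Omega\backslash\overline{D_h})^N}\le C\left(\|\widehat v\|_{H^1(\Omega\backslash\overline{D_h})^N}+\|\widehat{v_2}\|_{H^1(\Omega\backslash\overline{D_h})^N}\right)\le C\|\widehat{v_2}\|_{H^{1/2}(\partial(\Omega\backslash\overline{D_h}))^N},
\end{equation*}
and inserting the two layer-potential bounds closes the estimate; together with the previous step this gives (\ref{eq:Final perturbed estimate}).

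The hard part is producing the sharp power $h^{N-1}$ and the weak norm $H^{-3/2}(\partial D)$ in the $\psi$--term rather than a cruder bound. The key estimate is energy-type and couples the trace of $\widehat{v_2}$ on $\partial\Omega$ with its trace on the shrinking surface $\partial D_h$, so the whole argument rests on the sharp small-inclusion layer-potential estimates for the Lam\'e system and on careful book-keeping of how the traction operator $\mathcal{N}_{\mathscr{C}^{(0)}}$ and the norm $\|\psi(h\,\cdot)\|_{H^{-3/2}(\partial D)}$ behave under the dilation $x\mapsto x/h$. A second, genuinely new, point is that $T\nu$ need not vanish on $\partial D_h$, so $\mathcal{N}_{\mathcal{C}^{(0)}}$ and $\mathcal{N}_{\mathscr{C}^{(0)}}$ differ there by the lower-order term $\nu\cdot(T\nabla\,\cdot)$, which must be absorbed using $T\in W^{2,\infty}$; this is precisely why the comparison solution is built with $\mathcal{N}_{\mathcal{C}^{(0)}}$ rather than with the natural Lam\'e traction $\mathcal{N}_{\mathscr{C}^{(0)}}$.
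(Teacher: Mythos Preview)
Your proposal is correct and follows essentially the same route as the paper. Your $v_2$ is the paper's $\mathbb{V}_2$, your comparison field $\widehat{v_2}$ is the paper's $\mathbb{W}$ (the Lam\'e solution with inner traction $\psi$ and zero outer traction, whose bound $\|\mathbb{W}\|_{H^{1/2}(\partial\Omega)^N}\le Ch^{N-1}\|\psi(h\,\cdot)\|_{H^{-3/2}(\partial D)^N}$ is quoted from \cite{hu2015nearly}), and your $\widehat v$ is the paper's $\mathbb{V}_3$, to which the key estimate (\ref{eq:Key Elliptic Estimate}) and the trace/inverse-trace inequalities (\ref{eq:Trace theorem}) are applied exactly as you describe.

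One remark: the paper simply asserts $\mathcal{N}_{\mathcal{C}^{(0)}}=\mathcal{N}_{\mathscr{C}^{(0)}}$ on all of $\partial(\Omega\backslash\overline{D_h})$ when passing from $\mathbb{V}_2$ to $\mathbb{V}_3$, i.e.\ it tacitly treats $T\nu$ as vanishing on $\partial D_h$ as well as on $\partial\Omega$. Your closing paragraph correctly flags that this identification is not automatic on the inner boundary and proposes to absorb the discrepancy $\nu\cdot(T\nabla\,\cdot)$ using $T\in W^{2,\infty}$; that extra care is a refinement of, not a departure from, the paper's argument.
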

\begin{proof}
Let $\mathbb{V}=u_{0}-v$ in $\Omega\backslash\overline{D_{h}}$,
then $\mathbb{V}$ is a solution of 
\[
\begin{cases}
\mathcal{L}_{0}^{R}\mathbb{V}+\kappa^{2}\mathbb{V}=0 & \mbox{ in }\Omega\backslash\overline{D_{h}},\\
\mathcal{N}_{\mathcal{C}^{(0)}}\mathbb{V}=\mathcal{N}_{\mathcal{C}^{(0)}}u_{0}-\psi & \mbox{ on }\partial D_{h},\\
\mathcal{N}_{\mathcal{C}^{(0)}}\mathbb{V}=0 & \mbox{ on }\partial\Omega.
\end{cases}
\]
Decompose $\mathbb{V}:=\mathbb{V}_{1}-\mathbb{V}_{2}$ such that $\mathbb{V}_{1}$
is a solution of 
\[
\begin{cases}
\mathcal{L}_{0}^{R}\mathbb{V}_{1}+\kappa^{2}\mathbb{V}_{1}=0 & \mbox{ in }\Omega\backslash\overline{D_{h}},\\
\mathcal{N}_{\mathcal{C}^{(0)}}\mathbb{V}_{1}=\mathcal{N}_{\mathcal{C}^{(0)}}u_{0} & \mbox{ on }\partial D_{h},\\
\mathcal{N}_{\mathcal{C}^{(0)}}\mathbb{V}_{1}=0 & \mbox{ on }\partial\Omega,
\end{cases}
\]
and $\mathbb{V}_{2}$ is a solution of 
\[
\begin{cases}
\mathcal{L}_{0}^{R}\mathbb{V}_{2}+\kappa^{2}\mathbb{V}_{2}=0 & \mbox{ in }\Omega\backslash\overline{D_{h}},\\
\mathcal{N}_{\mathcal{C}^{(0)}}\mathbb{V}_{2}=\psi & \mbox{ on }\partial D_{h},\\
\mathcal{N}_{\mathcal{C}^{(0)}}\mathbb{V}_{2}=0 & \mbox{ on }\partial\Omega.
\end{cases}
\]
Making use of (\ref{eq:Estimate of v_0 on outer boundary}), it is
easy to see 
\begin{equation}
\|\mathbb{V}_{1}\|_{H^{1/2}(\partial\Omega)^{N}}\leq Ch^{N}\|\phi\|_{H^{-1/2}(\partial\Omega)^{N}},\label{eq:V1 estimate}
\end{equation}
where $C>0$ is independent of $\mathbb{V}_{1}$ and $\phi$. It remains
to estimate $\mathbb{V}_{2}$. 

Let $\mathbb{W}$ be a solution of 
\[
\begin{cases}
\mathcal{L}_{0}\mathbb{W}+\eta^{2}\mathbb{W}=0 & \mbox{ in }\Omega\backslash\overline{D_{h}},\\
\mathcal{N}_{\mathcal{\mathscr{C}}^{(0)}}\mathbb{W}=\psi & \mbox{ on }\partial D_{h},\\
\mathcal{N}_{\mathcal{\mathscr{C}}^{(0)}}\mathbb{W}=0 & \mbox{ on }\partial\Omega,
\end{cases}
\]
and in Section 5 of \cite{hu2015nearly}, the authors proved that
\[
\|\mathbb{W}\|_{H^{1/2}(\partial\Omega)^{N}}\leq Ch^{N-1}\|\psi(h\cdot)\|_{H^{-3/2}(\partial\Omega)^{N}},
\]
whenever $-\eta^{2}$ is not an eigenvalue of $\mathcal{L}_{0}$ with
zero boundary traction. Set $\mathbb{V}_{3}=\mathbb{V}_{2}-\mathbb{W}$,
then $\mathbb{V}_{3}$ is a solution of 
\[
\begin{cases}
\mathcal{L}_{0}^{R}\mathbb{V}_{3}+\kappa^{2}\mathbb{V}_{3}=(\eta^{2}-\kappa^{2})\mathbb{W}-R\mathbb{W} & \mbox{ in }\Omega\backslash\overline{D_{h}},\\
\mathcal{N}_{\mathcal{C}^{(0)}}\mathbb{V}_{3}=0 & \mbox{ on }\partial(\Omega\backslash\overline{D_{h}}),
\end{cases}
\]
where we have used the same boundary traction, which means $\mathcal{N}_{\mathcal{C}^{(0)}}=\mathcal{N}_{\mathcal{\mathscr{C}}^{(0)}}$
on $\partial(\Omega\backslash\overline{D_{h}})$. Hence, by (\ref{eq:Key Elliptic Estimate})
and (\ref{eq:Trace theorem}), we can derive 
\begin{eqnarray}
\|\mathbb{V}_{3}\|_{H^{1/2}(\partial(\Omega\backslash\overline{D_{h}}))^{N}} & \leq & C\|\mathbb{V}_{3}\|_{H^{1}(\Omega\backslash\overline{D_{h}})^{N}}\label{eq:V_3 estimate}\\
 & \leq & C\|\mathbb{W}\|_{H^{1}(\Omega\backslash\overline{D_{h}})^{N}}\nonumber \\
 & \leq & C\left(h\|\phi\|_{H^{1/2}(\partial\Omega)^{N}}+h^{N-1}\|\psi(h\cdot)\|_{H^{-3/2}(\partial D)^{N}}\right).\nonumber 
\end{eqnarray}
Finally, combining (\ref{eq:V1 estimate}), and (\ref{eq:V_3 estimate})
yields (\ref{eq:Final perturbed estimate}) to be valid, which means
we complete the proof of this lemma.
\end{proof}
\textbf{}\\
Now, we can prove our main result.\textbf{}\\
\textbf{Proof of Theorem \ref{thm:Main 2}.} By Lemma \ref{lem:Boundary estimate on cloaked layer},
if we set $\varphi=\mathcal{N}_{\mathcal{C}^{(0)}}^{+}\widetilde{u}|_{\partial D_{h}}$,
we have $v=\widetilde{u}$ and $\Phi^{+}=\varphi(h\cdot)$. By (\ref{eq:Final perturbed estimate}),
we obtain 
\begin{equation}
\|\widetilde{u}-u_{0}\|_{H^{1/2}(\partial\Omega)^{N}}\leq C\left(h\|\phi\|_{H^{-1/2}(\partial\Omega)^{N}}+h^{N-1}\|\Phi^{+}\|_{H^{-3/2}(\partial D)^{N}}\right).\label{eq:Last perturbed relation}
\end{equation}
Moreover, combine the estimate (\ref{eq:Phi +}) and (\ref{eq:Last perturbed relation}),
and the Young's inequality will lead the desired estimate (\ref{eq:Near cloaking estimte (Main result)})
to hold. We complete the proof. In summary, in this work, we have
built up the nearly cloaking theory for the elasticity system with
residual stress.

\section{Appendix}

In Appendix, we utilize layer potential methods for the Lamé system
to derive (\ref{eq:Estimate of v_0 on inner boundary}). For $x\neq y\in\mathbb{R}^{N}$,
let 
\[
G_{\eta}(x,y)=\begin{cases}
\dfrac{\exp(\sqrt{-1}\eta|x-y|)}{4\pi|x-y|} & \mbox{ when }N=3,\\
\dfrac{i}{4}H_{0}^{(1)}(\eta|x-y|) & \mbox{ when }N=2,
\end{cases}
\]
where $H_{0}^{(1)}$ is the \textit{Hankel function} of the first
kind of order $0$. The Green's tensor $\Pi(x,y)$ for the Lamé system
can be written as 
\[
\Pi(x,y)=\frac{1}{\mu}G_{k_{s}}(x,y)I_{N}+\frac{1}{\eta^{2}}\mbox{grad}_{x}\mbox{grad}_{x}^{T}[G_{k_{s}}(x,y)-G_{k_{p}}(x,y)],
\]
for $x\neq y\in\mathbb{R}^{N}$, $N=2,3$, where 
\[
k_{p}=\frac{\eta}{\sqrt{\lambda+2\mu}},\mbox{ }k_{s}=\frac{\eta}{\sqrt{\mu}}
\]
are compressional and shear constants and $I_{N}$ is the $N\times N$
identity matrix.

Let $\mathcal{O}$ be a bounded simply connected domain in $\mathbb{R}^{N}$
and $\psi(x)$ be a surface density for $x\in\partial\mathcal{O}$,
then we can define the single and double layer potentials in the following
\begin{eqnarray*}
(\mathscr{S}_{\mathcal{O}}\psi)(x) & = & \int_{\partial\mathcal{O}}\Pi(x,y)\psi(y)dS(y),\mbox{ }x\in\mathbb{R}^{N}\backslash\partial\mathcal{O},\\
(\mathscr{D}{}_{\mathcal{O}}\psi)(x) & = & \int_{\partial\mathcal{O}}\Xi(x,y)\psi(y)dS(y),\mbox{ }x\in\mathbb{R}^{N}\backslash\partial\mathcal{O},
\end{eqnarray*}
where $\Xi(x,y)$ is a matrix-valued function with the $i$-th column
vector is 
\[
[\Xi(x,y)]^{T}e_{i}=\mathcal{N}_{\mathscr{C}^{(0)}}[\Xi(x,y)e_{i}]\mbox{ on }\partial\mathcal{O}.
\]
In addition, we set 
\begin{eqnarray*}
(\mathcal{S}_{\partial\mathcal{O}}\psi)(x) & = & \int_{\partial\mathcal{O}}\Pi(x,y)\psi(y)dS(y),\mbox{ }x\in\partial\mathcal{O},\\
(\mathcal{K}_{\partial\mathcal{O}}\psi)(x) & = & \int_{\partial\mathcal{O}}\Xi(x,y)\psi(y)dS(y),\mbox{ }x\in\partial\mathcal{O}.
\end{eqnarray*}

Now, we can prove (\ref{eq:Estimate of v_0 on inner boundary}). Let
$u_{0}$, $v_{0}$, $\phi$ be the same functions defined in previous
sections and let 
\[
V(x):=\int_{\partial D_{h}}\Pi(x,y)\mathcal{N}_{\mathscr{C}^{(0)}}u_{0}dS(y),\mbox{ }x\in\Omega\backslash D_{h},
\]
the authors \cite{hu2015nearly} proved the following estimates 
\begin{equation}
\|V\|_{C(\partial D_{h})}\leq Ch\|\phi\|_{H^{-1/2}(\partial\Omega)^{N}},\mbox{ }\|V\|_{C(\partial\Omega)}\leq Ch^{N}\|\phi\|_{H^{-1/2}(\partial\Omega)^{N}}\label{eq:Appendix 1}
\end{equation}
and 
\begin{equation}
\|\zeta_{1}\|_{L^{2}(\partial D_{h})^{N}}\leq Ch^{(N+1)/2}\|\phi\|_{H^{-1/2}(\partial\Omega)^{N}},\mbox{ }\|\zeta_{2}\|_{L^{2}(\partial\Omega)^{N}}\leq Ch^{N}\|\phi\|_{H^{-1/2}(\partial\Omega)^{N}},\label{eq:Appendix 2}
\end{equation}
where $\zeta_{1}:=w|_{\partial D_{h}}$, $\zeta_{2}:=w|_{\partial\Omega}$.
By the jump relations for double layer potentials, we get 
\[
\zeta_{1}(x)=2\left[(\mathcal{K}_{\partial D_{h}}\zeta_{1})(x)-(\mathscr{D}{}_{\partial\Omega}\zeta_{2})(x)+V(x)\right]\mbox{ when }x\in\partial D_{h}.
\]
Use the similar arguments in \cite{hu2015nearly}, from (\ref{eq:Appendix 1}),
then we can derive 
\[
\|V\|_{H^{1/2}(\partial D_{h})^{N}}\leq C\|V\|_{C(\partial D_{h})^{N}}\leq Ch\|\phi\|_{H^{-1/.2}(\partial\Omega)}.
\]
Finally, from (\ref{eq:Appendix 2}) and the boundedness of $\mathcal{K}_{\partial D_{h}}:L^{2}(D_{h})^{N}\to H^{1}(D_{h})^{N}$,
we have 
\[
\|\mathcal{K}_{\partial D_{h}}\zeta_{1}\|_{H^{1/2}(\partial D_{h})^{N}}\leq Ch^{(N+1)/2}\|\phi\|_{H^{-1/2}(\partial\Omega)^{N}}.
\]
Finally, from (\ref{eq:Appendix 2}) again, we obtain 
\[
\|\mathscr{D}{}_{\partial\Omega}\zeta_{2}\|_{H^{1/2}(\partial D_{h})^{N}}\leq C\|\mathscr{D}{}_{\partial\Omega}\zeta_{2}\|_{C(\partial D_{h})^{N}}\leq Ch^{N}\|\phi\|_{H^{-1/2}(\partial\Omega)^{N}}
\]
as desired, which complete the proof of (\ref{eq:Estimate of v_0 on inner boundary}).

\bibliographystyle{plain}
\bibliography{ref}

\begin{thebibliography}{10}

\bibitem{ammari2012enhancement}
H.~Ammari, J.~Garnier, V.~Jugnon, H.~Kang, H.~Lee, and M.~Lim.
\newblock Enhancement of near-cloaking. part {I}{I}{I}: Numerical simulations,
  statistical stability, and related questions.
\newblock {\em Contemporary Mathematics}, 577:1--24, 2012.

\bibitem{ammari2007polarization}
H.~Ammari and H.~Kang.
\newblock {\em Polarization and moment tensors: with applications to inverse
  problems and effective medium theory}, volume 162.
\newblock Springer Science \& Business Media, 2007.

\bibitem{ammari2007asymptotic}
H.~Ammari, H.~Kang, and H.~Lee.
\newblock Asymptotic expansions for eigenvalues of the {L}am{\'e} system in the
  presence of small inclusions.
\newblock {\em Communications in Partial Differential Equations},
  32(11):1715--1736, 2007.

\bibitem{ammari2013Helmholtz}
H.~Ammari, H.~Kang, H.~Lee, and M.~Lim.
\newblock Enhancement of near-cloaking. part {I}{I}: The {H}elmholtz equation.
\newblock {\em Communications in Mathematical Physics}, 317(2):485--502, 2013.

\bibitem{ammari2013conductivity}
H.~Ammari, H.~Kang, H.~Lee, and M.~Lim.
\newblock Enhancement of near cloaking using generalized polarization tensors
  vanishing structures. part {I}: The conductivity problem.
\newblock {\em Communications in Mathematical Physics}, 317(1):253--266, 2013.

\bibitem{ammari2013Maxwell}
H.~Ammari, H.~Kang, H.~Lee, M.~Lim, and S.~Yu.
\newblock Enhancement of near cloaking for the full {M}axwell equations.
\newblock {\em SIAM Journal on Applied Mathematics}, 73(6):2055--2076, 2013.

\bibitem{bao2014nearly}
G.~Bao and H.~Liu.
\newblock Nearly cloaking the electromagnetic fields.
\newblock {\em SIAM Journal on Applied Mathematics}, 74(3):724--742, 2014.

\bibitem{baojun2014nearly}
G.~Bao, H.~Liu, and J.~Zou.
\newblock Nearly cloaking the full maxwell equations: Cloaking active contents
  with general conducting layers.
\newblock {\em Journal de Math{\'e}matiques Pures et Appliqu{\'e}es},
  101(5):716--733, 2014.

\bibitem{brun2009achieving}
M.~Brun, S.~Guenneau, and A.~B. Movchan.
\newblock Achieving control of in-plane elastic waves.
\newblock {\em Applied Physics Letters}, 94(6):061903, 2009.

\bibitem{chen2010acoustic}
H.~Chen and C.~T. Chan.
\newblock Acoustic cloaking and transformation acoustics.
\newblock {\em Journal of Physics D: Applied Physics}, 43(11):113001, 2010.

\bibitem{deng2015regularized}
Y.~Deng, H.~Liu, and G.~Uhlmann.
\newblock On regularized full-and partial-cloaks in acoustic scattering.
\newblock {\em arXiv preprint arXiv:1502.01174}, 2015.

\bibitem{deng2015full}
Y.~Deng, H.~Liu, and G.~Uhlmann.
\newblock Full and partial cloaking in electromagnetic scattering.
\newblock {\em Arch Rational Mech Anal}, 2016.

\bibitem{diatta2013cloaking}
A.~Diatta and S.~Guenneau.
\newblock Cloaking via change of variables in elastic impedance tomography.
\newblock {\em arXiv preprint arXiv:1306.4647}, 2013.

\bibitem{diatta2014controlling}
A.~Diatta and S.~Guenneau.
\newblock Controlling solid elastic waves with spherical cloaks.
\newblock {\em Applied Physics Letters}, 105(2):021901, 2014.

\bibitem{greenleaf2007full}
A.~Greenleaf, Y.~Kurylev, M.~Lassas, and G.~Uhlmann.
\newblock Full-wave invisibility of active devices at all frequencies.
\newblock {\em Communications in Mathematical Physics}, 275(3):749--789, 2007.

\bibitem{greenleaf2007improvement}
A.~Greenleaf, Y.~Kurylev, M.~Lassas, and G.~Uhlmann.
\newblock Improvement of cylindrical cloaking with the {S}{H}{S} lining.
\newblock {\em Optics Express}, 15(20):12717--12734, 2007.

\bibitem{greenleaf2008isotropic}
A.~Greenleaf, Y.~Kurylev, M.~Lassas, and G.~Uhlmann.
\newblock Isotropic transformation optics: approximate acoustic and quantum
  cloaking.
\newblock {\em New Journal of Physics}, 10(11):115024, 2008.

\bibitem{greenleaf2009cloaking}
A.~Greenleaf, Y.~Kurylev, M.~Lassas, and G.~Uhlmann.
\newblock Cloaking devices, electromagnetic wormholes, and transformation
  optics.
\newblock {\em SIAM review}, 51(1):3--33, 2009.

\bibitem{greenleaf2009invisibility}
A.~Greenleaf, Y.~Kurylev, M.~Lassas, and G.~Uhlmann.
\newblock Invisibility and inverse problems.
\newblock {\em Bulletin of the American Mathematical Society}, 46(1):55--97,
  2009.

\bibitem{greenleaf2003anisotropic}
A.~Greenleaf, M.~Lassas, and G.~Uhlmann.
\newblock Anisotropic conductivities that cannot be detected by eit.
\newblock {\em Physiological measurement}, 24(2):413, 2003.

\bibitem{greenleaf2003nonuniqueness}
A.~Greenleaf, M.~Lassas, and G.~Uhlmann.
\newblock On nonuniqueness for {C}alder{\'o}n's inverse problem.
\newblock {\em Math. Res. Lett.}, 10, 2003.

\bibitem{hsiao2008boundary}
G.~C. Hsiao and W.~L. Wendland.
\newblock {\em Boundary integral equations}.
\newblock Springer, 2008.

\bibitem{hu2015nearly}
G.~Hu and H.~Liu.
\newblock Nearly cloaking the elastic wave fields.
\newblock {\em Journal de Math{\'e}matiques Pures et Appliqu{\'e}es},
  104(6):1045--1074, 2015.

\bibitem{kocyigit2013regular}
I.~Kocyigit, H.~Liu, and H.~Sun.
\newblock Regular scattering patterns from near-cloaking devices and their
  implications for invisibility cloaking.
\newblock {\em Inverse Problems}, 29(4):045005, 2013.

\bibitem{kohn2010cloaking}
R.~Kohn, D.~Onofrei, M.~S. Vogelius, and M.~I. Weinstein.
\newblock Cloaking via change of variables for the {H}elmholtz equation.
\newblock {\em Communications on Pure and Applied Mathematics},
  63(8):973--1016, 2010.

\bibitem{kohn2008cloaking}
R.~Kohn, H.~Shen, M.~Vogelius, and M.~Weinstein.
\newblock Cloaking via change of variables in electric impedance tomography.
\newblock {\em Inverse Problems}, 24(1):015016, 2008.

\bibitem{leonhardt2006optical}
U.~Leonhardt.
\newblock Optical conformal mapping.
\newblock {\em Science}, 312(5781):1777--1780, 2006.

\bibitem{li2015regularized}
J.~Li, H.~Liu, L.~Rondi, and G.~Uhlmann.
\newblock Regularized transformation-optics cloaking for the {H}elmholtz
  equation: from partial cloak to full cloak.
\newblock {\em Communications in Mathematical Physics}, 335(2):671--712, 2015.

\bibitem{liu2009virtual}
H.~Liu.
\newblock Virtual reshaping and invisibility in obstacle scattering.
\newblock {\em Inverse Problems}, 25(4):045006, 2009.

\bibitem{liu2013near}
H.~Liu.
\newblock On near-cloak in acoustic scattering.
\newblock {\em Journal of Differential Equations}, 254(3):1230--1246, 2013.

\bibitem{liu2013enhanced}
H.~Liu and H.~Sun.
\newblock Enhanced near-cloak by {F}{S}{H} lining.
\newblock {\em Journal de Math{\'e}matiques Pures et Appliqu{\'e}es},
  99(1):17--42, 2013.

\bibitem{marsden1994mathematical}
J.~Marsden and T.~Hughes.
\newblock {\em Mathematical foundations of elasticity}.
\newblock Courier Corporation, 1994.

\bibitem{mclean2000strongly}
W.~McLean.
\newblock {\em Strongly elliptic systems and boundary integral equations}.
\newblock Cambridge university press, 2000.

\bibitem{milton2006cloaking}
G.~Milton, M.~Briane, and J.~Willis.
\newblock On cloaking for elasticity and physical equations with a
  transformation invariant form.
\newblock {\em New Journal of Physics}, 8(10):248, 2006.

\bibitem{norris2008acoustic}
A.~Norris.
\newblock Acoustic cloaking theory.
\newblock In {\em Proceedings of the Royal Society of London A: Mathematical,
  Physical and Engineering Sciences}, volume 464, pages 2411--2434. The Royal
  Society, 2008.

\bibitem{pendry2006controlling}
J.~Pendry, D.~Schurig, and D.~Smith.
\newblock Controlling electromagnetic fields.
\newblock {\em science}, 312(5781):1780--1782, 2006.

\bibitem{ruan2007ideal}
Z.~Ruan, M.~Yan, C.~Neff, and M.~Qiu.
\newblock Ideal cylindrical cloak: perfect but sensitive to tiny perturbations.
\newblock {\em Physical Review Letters}, 99(11):113903, 2007.

\bibitem{tanuma2007stroh}
K~Tanuma.
\newblock Stroh formalism and rayleigh waves.
\newblock {\em Journal of Elasticity}, 89(1-3):5--154, 2007.

\bibitem{uhlmann2014seenunseen}
G.~Uhlmann.
\newblock Inverse problems: seeing the unseen.
\newblock {\em Bulletin of Mathematical Sciences}, 4(2):209--279, 2014.

\end{thebibliography}

\end{document}